\newtheorem{thm}{Theorem}[section]
\newtheorem{lem}[thm]{Lemma}
\newtheorem{rem}[thm]{Remark}
\numberwithin{equation}{section}
\title{Operator-Based Uncertainty Quantification \\ of Stochastic Fractional PDEs \thanks{This work was supported by the AFOSR Young Investigator Program (YIP) award (FA9550-17-1-0150).}}
\author{
	Ehsan Kharazmi
	\footnote{D\lowercase{epartment of} C\lowercase{omputational} M\lowercase{athematics}, S\lowercase{cience}, \lowercase{and}, E\lowercase{ngineering} \& D\lowercase{epartment of} M\lowercase{echanical} E\lowercase{ngineering},	
		M\lowercase{ichigan} S\lowercase{tate} U\lowercase{niversity}, 428 S S\lowercase{haw} L\lowercase{ane}, E\lowercase{ast} L\lowercase{ansing}, MI 48824, USA}
	, Mohsen Zayernouri
	\footnote{D\lowercase{epartment of} C\lowercase{omputational} M\lowercase{athematics}, S\lowercase{cience}, \lowercase{and}, E\lowercase{ngineering} \&
		D\lowercase{epartment of} M\lowercase{echanical} E\lowercase{ngineering},	
		M\lowercase{ichigan} S\lowercase{tate} U\lowercase{niversity}, 428 S S\lowercase{haw} L\lowercase{ane}, E\lowercase{ast} L\lowercase{ansing}, MI 48824, USA,  C\lowercase{orresponding author; zayern@msu.edu}}
}
\begin{document}
\maketitle
\begin{abstract}
Fractional calculus provides a rigorous mathematical framework to describe anomalous stochastic processes by generalizing the notion of classical differential equations to their fractional-order counterparts. By introducing the fractional orders as uncertain variables, we develop an operator-based uncertainty quantification framework in the context of stochastic fractional partial differential equations (SFPDEs), subject to additive random noise. We characterize different sources of uncertainty and then, propagate their associated randomness to the system response by employing a probabilistic collocation method (PCM). We develop a fast, stable, and convergent Petrov-Galerkin spectral method in the physical domain in order to formulate the forward solver in simulating each realization of random variables in the sampling procedure.
\begin{keywords}
Stochastic fractional PDEs, forward uncertainty quantification, Monte Carlo simulation, probabilistic collocation method, Smolyak sparse grid, Petrov-Galerkin spectral method.
\end{keywords}
\end{abstract}
%
%

\pagestyle{myheadings}
\thispagestyle{plain}

%
\section{Introduction}
\label{Sec: Introduction}
%

Fractional models construct a tractable mathematical framework to describe and predict the behavior of multi-scales multi-physics complex phenomena. Particularly, fractional differential equations, as a well-structured generalization of their integer order counterparts, provide a rigorous mathematical tool to develop models, describing anomalous behavior in complex physical systems \cite{zhang2017review,jaishankar2014,jha2003evidence, Castillo2004Plasma,jaishankar2013,naghibolhosseini2015estimation,naghibolhosseini2017fractional,meer01, meral2010fractional}, where the anomalies manifest in heavy tailed distribution of corresponding underlying stochastic processes, moreover, exhibiting sharp peaks, intermittency, and asymmetry in the underlying distributions. Significant approximations as inherent part of assumptions upon which the model is built, lack of information about true values of parameters (incomplete data), and random nature of quantities being modeled pervade uncertainty in the corresponding mathematical formulations \cite{cullen1999probabilistic,roy2011comprehensive}. In this work, we develop an uncertainty quantification (UQ) framework in the context of stochastic fractional partial differential equations (SFPDEs), in which we characterize different sources of uncertainties and further propagate the associated randomness to the system response quantity of interest (QoI).

\vspace{0.1 in}
%
\noindent\textbf{Types and Sources of Uncertainty}.
%
The model uncertainties are in general being classified as aleatory, epistemic, and mixed, according to their fundamental essence. They can also be broadly characterized as occurring in model inputs, numerical approximations, and model form. Model inputs encompass all model parameters coming from geometry, constitutive laws, and fields equation, while also pertaining surrounding interactions, such as boundary conditions and random forcing sources (noise). Numerical approximations, which are an essence of differential equations since they generally do not lend themselves to analytical solutions, introduce uncertainty by imposing different sources of discretization error, iterative convergence error, and round off error. The fractional derivatives introduce derivative orders, namely fractional indices, as new set of model parameters in addition to model coefficients. These new parameters are strongly tied to the distribution of underlying stochastic process and their statistics are estimated from experimental observations in practice, see e.g.  \cite{benson2000application,baeumer2001subordinated}.

\vspace{0.1 in}
%
\noindent\textbf{Uncertainty Framework}.
%
Conventional approaches in parametric UQ of differential equations is based around Monte Carlo sampling (MCS) \cite{carlo1996concepts}, which performs ensemble of forward calculations to map the uncertain input space to the uncertain output space.
This method enjoys from being embarrassingly parallelizable and can be implement quite readily on high dimensional random spaces. However, the key issue is the slow rate of convergence $\sim 1/\sqrt{N}$ with $N$ number of realization, which consequently imposes exhaustively so many operations of forward solver, makes it not practical for expensive simulations. Other methods such as sequential MCS \cite{del2006sequential} and multilevel sequential MSC \cite{beskos2017multilevel} are also developed and recently used in \cite{jasra2016forward} to improve the parametric uncertainty assessment in elliptic nonlocal equations. An alternative to expensive MCS is to build surrogate models. An extensive comparison of two widely used ones, namely polynomial chaos and Gaussian process, are provided in a recent work \cite{owen2017comparison}. Polynomial chaos, in which the output of stochastic model is represented as a series expansion of input parameters, was initially applied in \cite{ghanem2003stochastic}, and later extended and used in \cite{xiu2002modeling,xiu2002wiener,knio2006uncertainty}. It is also generalized and used in constructing stochastic Galerkin methods \cite{babuska2004galerkin,babuvska2005solving,le2004uncertainty,le2004multi} for problems with higher-dimensional random spaces. Other non-sampling numerical methods, including but not limited to perturbation method \cite{schuss1980singular,babuvska2002solving,todor2005sparse,winter2002groundwater} and moment equation method \cite{liu1986probabilistic,liu1986random} are also developed, however their applications are restricted to stochastic systems with relatively low-dimensional random space. These so-called ``\textit{intrusive}" approaches typically do not treat the forward solver as a black-box, rather require some knowledge and reformulation of the governing equations and thus, may not be practical in many problems with complex codes.

A wide range of ``\textit{non-intrusive}" techniques mostly stretch over sampling, quadrature, and regression, see \cite{owen2017comparison} and references therein. More recently, high-order probabilistic collocation methods (PCM), employing the idea of interpolation/collocation in the random spaces, are developed in \cite{xiu2005high,babuvska2007stochastic,nobile2008sparse}. These methods limit the sample points to an efficient subset of random space, while adequately sampling the necessary range. The excellence in use of PCM is twofold; it has the benefit of easily sampling at nodal points that naturally leads to independent realizations of the problem as in MCS, and the advantage of fast convergence rate. The challenging post processing of solution statistics, which can essentially be described as a high-dimensional integration problem, can also be resolved by adopting sparse grid generators, such as Smolyak algorithm \cite{nobile2008sparse,Smolyak63}. The use of sparse grids, as opposed to full tensor product construction from one-dimensional quadrature rules, will effectively reduce the number of sampling, while preserving a fast convergence rate to high level of accuracy. 

\vspace{0.1 in}
%
\noindent\textbf{Forward Solver}.
%
A core task in computational forward UQ is to form an efficient numerical method, which for each realizations of random variables can accurately solves and simulates the deterministic counterpart of stochastic model in the physical domain. Such numerical method is usually called ``\textit{forward solver}" or ``\textit{simulator}". In the case of FPDEs, the excessive cost of numerical approximations due to the inherent nonlocal nature of fractional differential equations additionally become more challenging when generally most of uncertainty propagation techniques instruct operations of forward solver many times. This requires implementation of more efficient numerical schemes, which can manage increasing computational costs while maintaining sufficiently low error level in mitigating the corresponding uncertainties. In addition to numerous finite difference methods for solving FPDEs \cite{Gorenflo2002, Sun2006,  Lin2007, wang2010direct, wang2011fast, Cao2013, zeng2015numerical, zayernouri2016_JCP_Frac_AB_AM}, recent works have elaborated efficient spectral schemes, for discretizing FPDEs in physical domain, see e.g., \cite{Rawashdeh2006, Lin2007, Khader2011, Khader2012, Li2009, Li2010, chen2014generalized, wang2015high, bhrawy2015spectral}. More recently, Zayernouri et al. \cite{Zayernouri2013, zayernouri2015tempered} developed two new spectral theories on fractional and tempered fractional Sturm-Liouville problems, and introduced explicit corresponding eigenfunctions, namely \textit{Jacobi poly-fractonomials} of first and second kind. These eignefunctions are comprised of smooth and fractional parts, where the latter can be tunned to capture singularities of true solution. They are successfully employed in constructing discrete solution/test function spaces and developing a series of high-order and efficient Petrov-Galerkin spectral methods, see \cite{lischke2017petrov, suzuki2016fractional,samiee2016,samiee2017fast,samiee2017unified,samiee2018petrov,kharazmi2017petrov,kharazmi2017sem,kharazmi2018fractional}.

\vspace{0.1 in}

The main focus of this work is to develop an operator-based computational forward UQ framework in the context of stochastic fractional partial differential equation. Assuming that the mathematical model under consideration is well-posed and accounts in principle for all features of underlying phenomena, we identify three main sources of uncertainty, \textit{i}) parametric uncertainty, including fractional indices as new set of random parameters appeared in the operator, \textit{ii}) additive noises, which incorporates all intrinsic/extrinsic unknown forcing sources as lumped random inputs, and \textit{iii}) numerical approximations. Computational challenges arise when the admissible space of random inputs is infinite-dimensional, e.g. problems subject to additive noise \cite{rizzi2012uncertainty}, and thus, the framework involves uncertainty parametrization via a finite number of random space basis. Unlike the classical approach in modeling random inputs, which considers idealized uncorrelated processes (white noises), we model the random inputs as more/fully correlated random processes (colored noises), and parametrize them via Karhunen-Lo\`{e}ve (KL) expansion by assuming finite-dimensional noise assumption. This yields the problem in finite dimensional random space. We then, propagate the parametric uncertainties into the system response by applying PCM. We obtain the corresponding deterministic FPDE for each realization of random variables, using the Smolyak sparse grid generators for low to moderately high dimensions. In order to formulate the forward solver, we follow \cite{samiee2016} and develop a high-order Petrov-Galerkin (PG) spectral method to solve for each realization of SFPDE in the physical domain, employing Jacobi poly-fractonomials in addition to Legendre polynomials as temporal and spatial basis/test functions, respectively. The smart choice of coefficients in construction of spatial basis/test functions yields symmetric properties in the resulting mass/stiffness matrix, which is then exploited to formulate an efficient fast solver. We also show that for each realization of random variables, the deterministic problem is mathematically well-posed and the proposed forward solver is stable. By adopting sufficient number of basis in the physical domain, we eliminate the epistemic uncertainty associated with numerical approximation and isolate the impact of parametric uncertainty on system response QoI.


The organization of the paper is as follows. We recall some preliminaries on fractional calculus in section \ref{Sec: Fractional Calculus}. Then, we formulate the stochastic system in section \ref{Sec: Uncertainty framework}, and parametrize the random inputs. We also develop the stochastic sampling, namely PCM and MCS for our stochastic problem. We further construct the deterministic solver in section \ref{Sec: Forward Solver}, and provide the numerical results in section \ref{Sec: numerical results}. We end the paper with a conclusion and summary.


%
\section{Preliminaries on Fractional Calculus}
\label{Sec: Fractional Calculus}
%

Let $ \xi \in [-1,1]$. The left- and right-sided fractional derivative of order $\sigma$ are defined as (see e.g., \cite{Miller93, Podlubny99})
\begin{align}
\label{Eq: left RL derivative}
(\prescript{RL}{-1}{\mathcal{D}}_{\xi}^{\sigma}) u(\xi) = \frac{1}{\Gamma(n-\sigma)}  \frac{d^{n}}{d\xi^n} \int_{-1}^{\xi} \frac{u(s) ds}{(\xi - s)^{\sigma+1-n} },\quad \xi >-1 ,
\\
\label{Eq: right RL derivative}
(\prescript{RL}{\xi}{\mathcal{D}}_{1}^{\sigma}) u(\xi) = \frac{1}{\Gamma(n-\sigma)}  \frac{(-d)^{n}}{d\xi^n} \int_{\xi}^{1} \frac{u(s) ds}{(s - \xi)^{\sigma+1-n} },\quad \xi < 1 ,
\end{align}
respectively. 
An alternative approach in defining the fractional derivatives is the left- and right-sided Caputo derivatives of order $\sigma$, $n-1 < \sigma \leq n$, $n \in \mathbb{N}$, defined, as
\begin{align}
\label{Eq: left Caputo derivative}
&
(\prescript{C}{-1}{\mathcal{D}}_{\xi}^{\sigma} u) (\xi) = \frac{1}{\Gamma(n-\sigma)}  \int_{-1}^{\xi} \frac{u^{(n)}(s) ds}{(\xi - s)^{\sigma+1-n} },\quad \xi>-1,
\\
\label{Eq: right Caputo derivative}
&
(\prescript{C}{\xi}{\mathcal{D}}_{1}^{\sigma} u) (\xi) =  \frac{1}{\Gamma(n-\sigma)}  \int_{\xi}^{1} \frac{u^{(n)}(s) ds}{(s-\xi)^{\sigma+1-n} },\quad \xi<1.
\end{align}
By performing an affine mapping from the standard domain $[-1,1]$ to the interval $t \in [a,b]$, we obtain
\begin{eqnarray}
\label{Eq: RL in xL-xR}
\prescript{RL}{a}{\mathcal{D}}_{t}^{\sigma} u  &=&  (\frac{2}{b-a})^\sigma (\prescript{RL}{-1}{\mathcal{D}}_{\xi}^{\sigma} \, u )(\xi), 
\\ 
\label{Eq: Caputo in xL-xR}
\prescript{C}{a}{\mathcal{D}}_{t}^{\sigma} u  &=&  (\frac{2}{b-a})^\sigma (\prescript{C}{-1}{\mathcal{D}}_{\xi}^{\sigma} \, u) (\xi).
\end{eqnarray} 
Hence, we can perform the operations in the standard domain only once for any given $\sigma$ and efficiently utilize them on any arbitrary interval without resorting to repeating the calculations. Moreover, the corresponding relationship between the Riemann-Liouville and Caputo fractional derivatives in $[a,b]$ for any $\sigma \in (0,1)$ is given by 
\begin{equation}
\label{Eq:  Caputo vs. Riemann}
(\prescript{RL}{a}{\mathcal{D}}_{t}^{\sigma} \, u) (t)  =  \frac{ u(a)}{\Gamma(1-\sigma) (t-a)^{\sigma}}  +   (\prescript{C}{a}{\mathcal{D}}_{t}^{\sigma} \, u) (t).
\end{equation}
%

%
\section{Forward Uncertainty Framework}
\label{Sec: Uncertainty framework}
%

%
%
\subsection{Formulation of Stochastic FPDE}
\label{Sec: Problem Definition}
%
%
Let $\mathbb{D} = [0,T] \times [a_1,b_1] \times [a_2,b_2] \times \cdots \times [a_d,b_d]$ be the physical computational domain for some positive integer $d$ and stochastic function $u(t,\textbf{x};\omega): \mathbb{D} \times \Omega \rightarrow \mathbb{R}$, where $\omega \in \Omega$ denotes the random inputs of the system in a properly defined complete probability space $(\Omega, \mathcal{F},\mathbb{P})$. We consider the following SFPDE, subject to certain homogeneous Dirichlet initial/boundary conditions and stochastic process as additional force function, given as
\begin{align}
\label{Eq: Stochastic FPDE}
&
\mathcal{L}^{q(\omega)} \, u(t,\textbf{x};\omega) = F(t,\textbf{x};\omega)
\\
\label{Eq: IC}
& u \big\arrowvert_{t=0} = 0 ,  
\\
\label{Eq: BC}
& u \big\arrowvert_{x=a_j} = u \big\arrowvert_{x=b_j} = 0 , 
\end{align}
such that for $\mathbb{P}$-almost everywhere $\omega \in \Omega $ the equation holds. The stochastic fractional operator and force term, are given respectively as:
\begin{align}
\label{Eq: Stochastic Fractional Operator}
&
\mathcal{L}^{q(\omega)} \, = 
\prescript{}{0}{\mathcal{D}}_{t}^{\alpha(\omega)}
- \sum_{j=1}^{d} \, k_{j} \,\left[ \prescript{}{a_j}{\mathcal{D}}_{x_j}^{\beta_j(\omega)}
+ \prescript{}{x_j}{\mathcal{D}}_{b_j}^{\beta_j(\omega)} \right]
\\
& 
F(t,\textbf{x};\omega) = h(t,\textbf{x}) +  f(t;\omega) ,
\end{align}
where the fractional indices $\alpha(\omega) \in (0,1)$ and $\beta_j(\omega) \in (1,2), \,\, j=1,2,\cdots d$ are mutually independent random variables, $k_{j}$ are real positive constant coefficients, and the fractional derivatives are taken in the Riemann-Liouville sense. 
We assume that the driving terms $h$ and $f$ are properly posed, such that \eqref{Eq: Stochastic FPDE}-\eqref{Eq: BC} is well-posed $\mathbb{P}$-a.e. $\omega \in \Omega$, and also the solution in physical domain $\mathbb{D}$ is smooth enough such that we can construct a numerical scheme to solve each realization of SFPDE. As an extension to future works, the stochastic operator \eqref{Eq: Stochastic Fractional Operator} can be extended to $\alpha(\omega) \in (1,2)$ for the case of wave equations, and thus applied in formulating fractional models to study complex time-varying nonlinear fluid-solid interaction phenomena \cite{atanackovic2014,afzali2016vibrational,afzali2017analysis} and also the effect of damping in structural vibrations \cite{zamani2015asymmetric}.

%
%
\subsection{Representation of the Noise: Dimension Reduction}
\label{Sec: Noise}
%
%
We approximate the additional random forcing term by representing $f(t;\omega)$ into its finite dimensional version and thus, reduce the infinite-dimensional probability space to a finite-dimensional space. This is achieved via truncating Karhunen-Lo\`{e}ve (KL) expansion with the desired accuracy\cite{Loeve77}. 

Let $(\Omega, \mathcal{F},\mathbb{P})$ be a complete probability space, where $\Omega$ is the space of events, $\mathcal{F} \subset 2^{\Omega}$ denotes the $\sigma$-algebra of sets in $\Omega$, and $\mathbb{P}$ is the probability measure. The random field $f(t;\omega)$ has the ensemble mean $\mathbb{E}\{f(t; \omega)\} = \bar{f}(t)$, finite variance $\mathbb{E}\{ [f(t; \omega) - \bar{f}(t; \omega)]^2\}$ and covariance $C_f( t_1, t_2) = \mathbb{E}\{ [f(t_1; \omega) - \bar{f}(t_1; \omega)] [f(t_2; \omega) - \bar{f}(t_2; \omega)]\}$.  The KL expansion of $f(t; \omega)$ takes the form  
\begin{align}
\label{KL}
f(t;\omega) = \bar{f}(t; \omega) + \sum_{k=1}^\infty \sqrt{\lambda_k} \, \psi_k(t) \, Q_k(\omega),
\end{align}
where $\boldsymbol Q(\omega) = \lbrace Q_k(\omega) \rbrace \big|_{k=1}^{k=\infty}$ is a set of mutually uncorrelated random variables with zero mean and unit variance, while $\psi_k(t)$ and $\lambda_k$ are the eigenfunction and eigenvalues of the covariance kernel $C_f(  t_1, t_2 )$. 
We obtain the covariance kernel $C_f$ and its eigenvalues and eigenfunctions, following~\cite{Su06} and by solving a stochastic Helmholtz equation
\begin{equation}
\label{Helmholtz_roughness}
\triangle f(t; \omega) - m^2 f(t; \omega) = g( t;\omega),
\end{equation}
where the random forcing $g(t;\omega)$ is a white-noise process with zero mean and unit variance. The eigenvalues and eigenvectors of~\eqref{Helmholtz_roughness} form a Fourier series, so that the KL expansion~\eqref{KL} is replaced with its sine Fourier series version
\begin{equation}
\label{Fourier_KL}
f(t;\omega) =\bar{f}(t; \omega) +  \sum_{k=1}^\infty \, a_k \, \sin\left(\frac{2 k \pi \, t}{T}\right) \, Q_k(\omega),
\end{equation}
in which the random variables $Q_k(\omega)$ are chosen to be \textit{uniformly} distributed with probability density function $\rho_k(q_k)$. $T$ is the length of the process along the $t$-axis, and the coefficients
\begin{equation}
\label{a_k}
a_k =   \frac{2}{\sqrt T \ell^2} \left[ 1 + \left(\frac{2\pi k}{T\ell} \right)^2 \right]^{-1},
\end{equation}
where $\ell =T/A$ and $A$ is the correlation length of $f(\mathbf x;\omega)$. To ensure that the random variables $Q_k(\omega)$ have zero mean and unit variance, we define them on $Q_k(\omega) \in [-\sqrt 3, \sqrt 3]$. We note that this process is consistent to the zero-Dirichlet initial condition given in \eqref{Eq: IC}. Next, in order to render~\eqref{Fourier_KL} computable, we truncate the infinite series with a prescribed ($\approx 90\%$) fraction of the energy of the process, following the finite-dimensional noise assumption in stochastic computations. To this end, we set $T=1$, the correlation length $A=T/2$, and consider only the first four terms in the KL expansion. Let  $f_M(t;\omega) = \frac{1}{\mu} \, \sum_{k=1}^M \, a_k \, \sin\left(\frac{2 k \pi \, t}{T}\right) \, Q_k(\omega)$ denote the normalized truncated expansion, assuming $\bar{f}_M(t;\omega)=0$, where $\mu = \max_{t} \big\lbrace \sigma[f_M(t;\omega)] \big\rbrace$ and $\sigma_{f_M}$ is the standard deviation of $f_M(t;\omega)$. Thus, we represent the random process to be employed in \eqref{Eq: Stochastic FPDE} as
\begin{equation}
f(t;\omega) = \epsilon f_M(t;\omega)
\end{equation}
where $\epsilon$ is the amplitude of process. 

Therefore, the formulation of SFPDE ~\eqref{Eq: Stochastic FPDE} can be posed as follows: Find $u(t,\textbf{x};\omega): \mathbb{D}\times \Omega \rightarrow \mathbb{R}$ such that $\forall  t,\textbf{x} \in \mathbb{D}$  
\begin{align}
\label{Doob_momentum-1}
\prescript{}{0}{\mathcal{D}}_{t}^{\alpha(\omega)} u(t,\textbf{x};\omega) 
& - \sum_{j=1}^{d} \, k_{j} \,\left[ \prescript{}{a_j}{\mathcal{D}}_{x_j}^{\beta_j(\omega)}
+ \prescript{}{x_j}{\mathcal{D}}_{b_j}^{\beta_j(\omega)} \right]
u(t,\textbf{x};\omega) 
= h(t,\textbf{x}) +  f(t;Q_1(\omega) , Q_2(\omega) , \cdots , Q_M(\omega)) 
\end{align}
holds $\mathbb{P}$-a.s. for $\omega \in \Omega$, subject to the homogeneous initial and boundary conditions.

%
\subsection{Input Parametrization}
\label{Sec: Input Parametrization}
%
%
Let $Z: \Omega \rightarrow \mathbb{R}^{\mathcal{N}}$ be the set of $\mathcal{N} = 1+d+M$ independent random parameters, given as
\begin{align*}
Z = \Big\lbrace \,  Z_i \, \Big\rbrace_{i=1}^{\mathcal{N}} 
= \Big\lbrace \, 
\alpha(\omega) , \beta_1(\omega) , \beta_2(\omega) , \cdots , \beta_d(\omega) , Q_1(\omega) , Q_2(\omega) , \cdots , Q_M(\omega) 
\, \Big\rbrace
\end{align*}
with probability density functions $\rho_i : \Gamma_i \rightarrow \mathbb{R}, \, i=1,2,\cdots,\mathcal{N}$, where their images $\Gamma_i \equiv Z_i(\Omega)$ are bounded intervals in $\mathbb{R}$. The joint probability density function (PDF) 
\begin{align}
\rho(\boldsymbol \xi) = \prod_{i=1}^{\mathcal{N}} \rho_i(Z_i), \quad \forall \boldsymbol{\xi} \in \Gamma
\end{align}
with the support $\Gamma = \prod_{i=1}^{\mathcal{N}} \Gamma_{i} \subset \mathbb{R}^{\mathcal{N}}$ constitutes a mapping of the sample space $\Omega$ onto the target space $\Gamma$. Therefore, a random vector $\boldsymbol \xi = (\xi_1, \xi_2,\ldots,\xi_{\mathcal{N}}) \in \Gamma $ denote an arbitrary point in the parametric space. 

According to the Doob-Dynkin lemma~\cite{Oksendal98}, the solution $u(t,\textbf{x};\omega)$ can be expressed as $u(t,\textbf{x};\boldsymbol \xi)$, which provides a very useful tool to work in the target space rather than the abstract sample space. Thus, the formulation of SFPDE ~\eqref{Eq: Stochastic FPDE} can be posed as: Find $u(t,\textbf{x};\boldsymbol\xi): \mathbb{D}\times \Gamma \rightarrow \mathbb{R}$ such that $\forall  t,\textbf{x} \in \mathbb{D}$    
\begin{align}
\label{Doob_momentum-2}
\prescript{}{0}{\mathcal{D}}_{t}^{\alpha(\boldsymbol{\xi})} u(t,\textbf{x};\boldsymbol{\xi}) 
& - \sum_{j=1}^{d} \, k_{j} \,
\left[ \prescript{}{a_j}{\mathcal{D}}_{x_j}^{\beta_j(\boldsymbol{\xi})}
+ \prescript{}{x_j}{\mathcal{D}}_{b_j}^{\beta_j(\boldsymbol{\xi})} 
\right]
u(t,\textbf{x};\boldsymbol{\xi}) 
= h(t,\textbf{x}) +  f(t;\boldsymbol{\xi}) 
\end{align}
holds $\rho$-a.s. for $\boldsymbol\xi(\omega) \in \Gamma$ and $\forall  t,\textbf{x}\in \mathbb{D}$, subject to proper initial and boundary conditions.

%
\subsection{Stochastic Sampling}
\label{Sec: Stochastic Discretization}
%
We expound the two sampling methods, MCS and PCM to sample from random space and, then propagate the associated uncertainties by computing the statistics of stochastic solutions via post processing.

\vspace{0.1 in}
\textbf{Monte Carlo Sampling: MCS.}
The general procedure in statistical Monte Carlo sampling is the three following steps.
\begin{enumerate}
	\item Generating a set of random variables $\boldsymbol{\xi}_i$, $i = 1,2 , \cdots,K$ for a prescribed number of realizations $K$.
	
	\item Solving the deterministic problem \eqref{Doob_momentum-2} and obtaining the solution $u_i = u(t,\textbf{x};\boldsymbol{\xi}_i)$ for each $i = 1,2 , \cdots,K$.
	
	\item Computing the solution statistics, e.g. $\mathbb{E}[u] = \frac{1}{M} \sum_{i=1}^{M} u_i$.
	
\end{enumerate}
We note that step 1 and 3 are pre- and post- processing steps, respectively. Step 2 requires repetitive simulation of deterministic counterpart of the problem, which we obtain by developing a Petrov-Galerkin spectral method in the physical domain. Although MCS is relatively easy to implement once a deterministic forward solver is developed, it requires too many samplings for the solution statistics to converge, and yet the extra numerical cost due to non-locality and memory effect in fractional operators are still remained. In addition, the number of required sampling also grows rapidly as the dimension of problem increases, resulting in an exhaustively long run time for the statistics to converge.

\vspace{0.1 in}
\textbf{Probability Collocation Method: PCM.}
We employ a high-order stochastic discretization in the random space following \cite{xiu2005, Foo08} in order to construct a probabilistic collocation method (PCM), which yields a high convergence rate with much fewer number of sampling. The idea of PCM is based on polynomial interpolation, however in the random space. Let $\Theta_\mathcal{N} = \big\{ \, \xi_i \, \big\}_{i=1}^{\mathcal{J}}$ be a set of prescribed sampling points. By employing the Lagrange interpolation polynomials $L_i$, the polynomial approximation $\mathcal{I}$ of
the stochastic solution $u$ in the random space can be expressed as: 
\begin{equation}
\label{Eq: Lagrange Int}
\hat{u}(t,\textbf{x}; \boldsymbol \xi)  =
\mathcal{I}u(t,\textbf{x}; \boldsymbol \xi) 
= \sum_{i=1}^{\mathcal{J}} u(t,\textbf{x}; \xi_i)L_i(\boldsymbol{\xi}).
\end{equation}
Therefore, the collocation procedure of solving \eqref{Doob_momentum-2} to obtain the stochastic solution $u$ is:

\begin{align}
\label{Eq: PCM formulation}
R \left( \hat{u}(t,\textbf{x};\boldsymbol{\xi}) \right) \Big|_{\xi_i} 
= 
\left(
\mathcal{L}^{q(\boldsymbol{\xi})} \, \hat{u}(t,\textbf{x};\boldsymbol{\xi})) - F(t,\textbf{x};\boldsymbol{\xi}) 
\right)\Big|_{\xi_i}  = 0 \quad i =1,2,\cdots,\mathcal{J},
\end{align}
where $\mathcal{L}^{q}$ is given in \eqref{Eq: Stochastic Fractional Operator}. By using the property of Lagrange interpolants that satisfy the Kronecker delta at the interpolation points, we obtain:
\begin{align}
\label{Eq: PCM formulation-2}
\mathcal{L}^{q(\xi_i)} \, u(t,\textbf{x};\boldsymbol{\xi})) 
= F(t,\textbf{x};\boldsymbol{\xi}),
\quad i =1,2,\cdots,\mathcal{J},
\end{align}
subject to proper initial/boundary conditions. Thus, the probabilistic collocation procedure is equivalent to solving $\mathcal{J}$ deterministic problems \eqref{Eq: PCM formulation-2} with conditions \eqref{Eq: IC} and \eqref{Eq: BC}. Once the deterministic solutions are obtained at each sampling point, the numerical stochastic solution is interpolated, using \eqref{Eq: Lagrange Int}  to construct a global approximate $\hat{u} (t,\textbf{x}; \boldsymbol \xi)$. We then obtain the solution statistics as  
\begin{align}
\label{Eq: Expectation STD}
\mathbb{E}[u] = \int_{\Gamma} \hat{u} (t,\textbf{x}; \boldsymbol \xi) \, \rho(\boldsymbol \xi)  \, \mathrm d \boldsymbol \xi ,
\quad
\sigma[u] =\sqrt{\mathbb{E}[ \hat{u}^2 ] -   \mathbb{E}[\hat{u}]^2 }.
\end{align}
The above integrals can be computed efficiently by letting the interpolation/collocation points to be the same as a set of cubature rules $\Theta_\mathcal{N} = \big\{ \, \xi_i \, \big\}_{i=1}^{\mathcal{J}}$ on the parametric space with integration weights $\{\mathbf{w}_i\}_{i=1}^{\mathcal{J}}$, which are employed in computing the integral. By property of Kronecker delta of Lagrange interpolant and use of any quadrature rule over the above integral yields 
\begin{equation}
\label{Eq: Exp Approx}
\mathbb{E}[u(t,\textbf{x}:\boldsymbol{\xi})] 
\approx 
\sum_{i=1}^{\mathcal{J}} \, w_i \,u(t,\textbf{x};\xi_i).  
\end{equation}

\vspace{0.1 in}
\textbf{Choice of Collocation/Interpolation Points.}
A natural choice of the sampling points is the tensor-product of one-dimensional sets, which is efficients for low-dimensional random spaces. However, in high-dimensional multivariate case, where $\mathcal{N} > 6$, the tensor-product interpolation operators are computationally expensive due to the increasing nested summation loops. In addition, the total number of sampling points grows rapidly by increase of dimension by $\mathcal{J}^\mathcal{N}$, where $\mathcal{J}$ is the number of points in each direction.

Another choice that provides an alternative to the more costly full tensor product rule is the isotropic Smolyak sparse grid operator $A(w , \mathcal{N})$~\cite{Smolyak63,nobile2008sparse} with two input parameters dimension size $\mathcal{N}$ and the level of grid $w$. The Smolyak algorithm significantly reduces the total number of sampling points; see Fig.\ref{Fig: Grids} for comparison of $A(2,2)$, $A(4,2)$, and $A(6,2)$ with full tensor product rule for a two-dimensional random spaces. The total number of sampling points for each case is also listed in Table \ref{Table: number of sparse nodal points}. More research has also been devoted to the analysis and construction of Smolyak sparse grids \cite{xiu2005,barthelmann2000high,novak1999simple,novak1996high}.

%
%
\begin{figure}[t]
	\centering
	\begin{subfigure}{0.22\textwidth}
		\centering
		\includegraphics[width=1\linewidth]{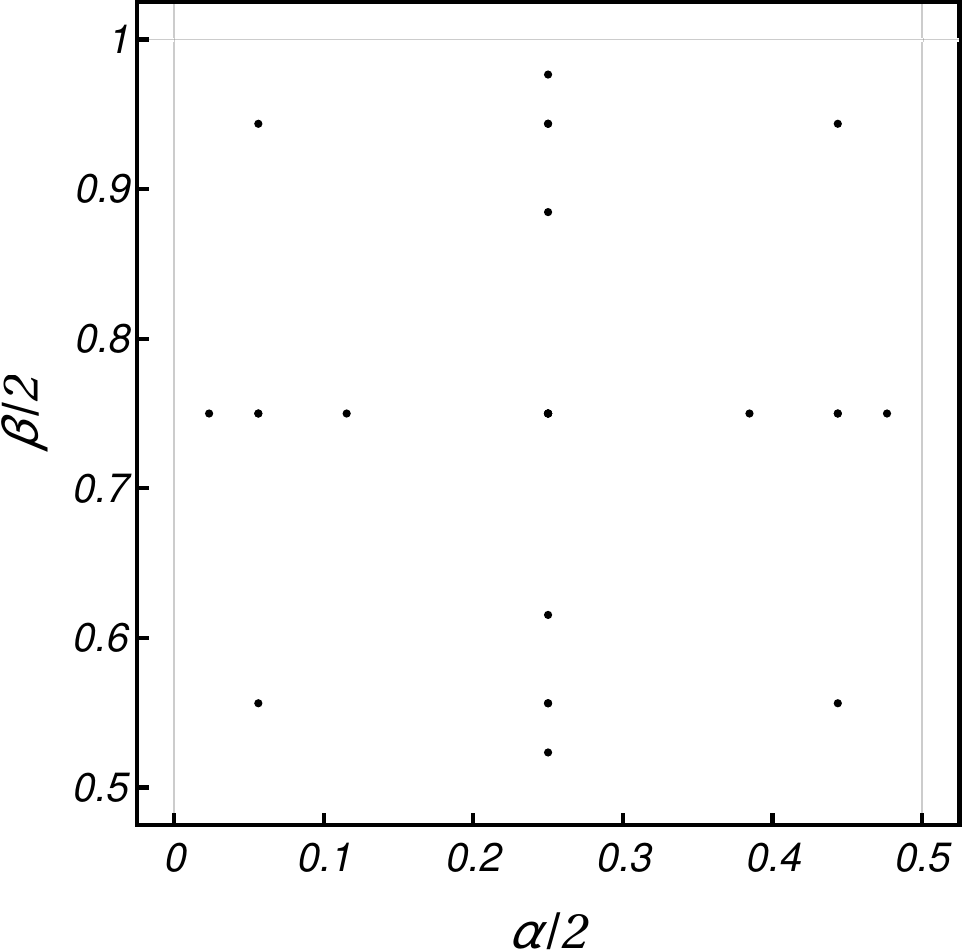}
		\caption{}
	\end{subfigure}
	\begin{subfigure}{0.22\textwidth}
		\centering
		\includegraphics[width=1\linewidth]{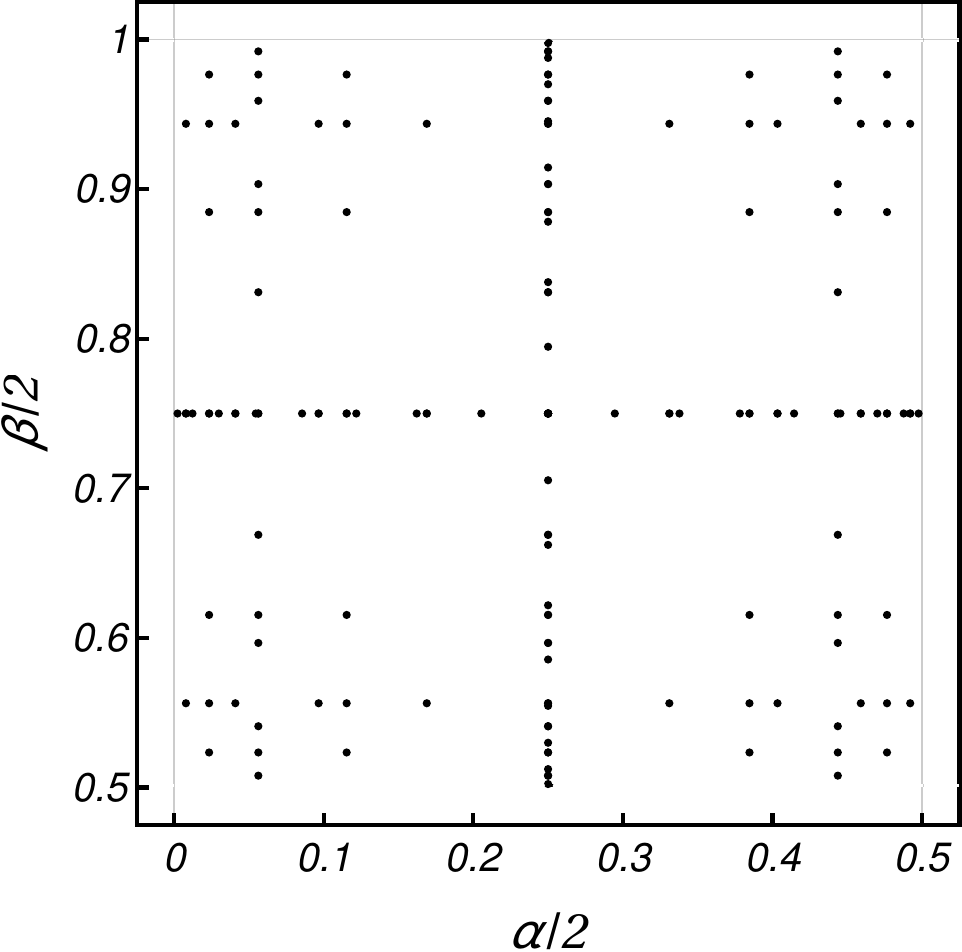}
		\caption{}
	\end{subfigure}
	\begin{subfigure}{0.22\textwidth}
		\centering
		\includegraphics[width=1\linewidth]{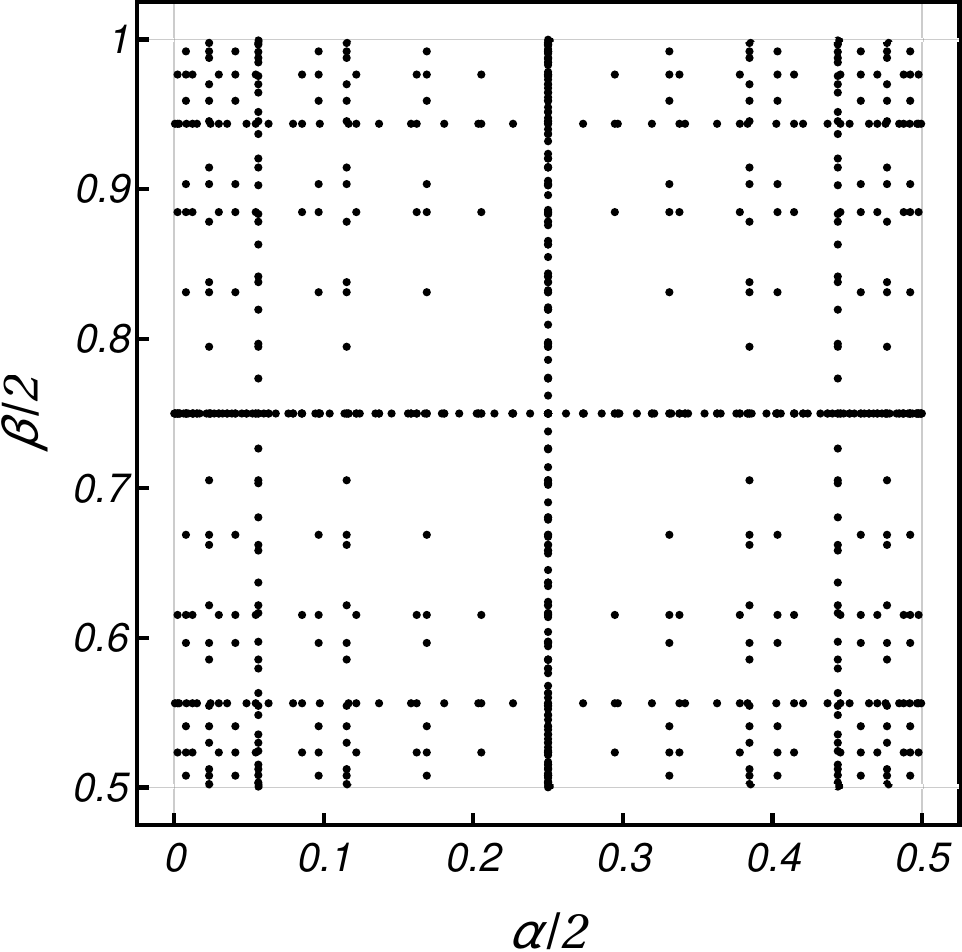}
		\caption{}
	\end{subfigure}
	\begin{subfigure}{0.22\textwidth}
		\centering
		\includegraphics[width=1\linewidth]{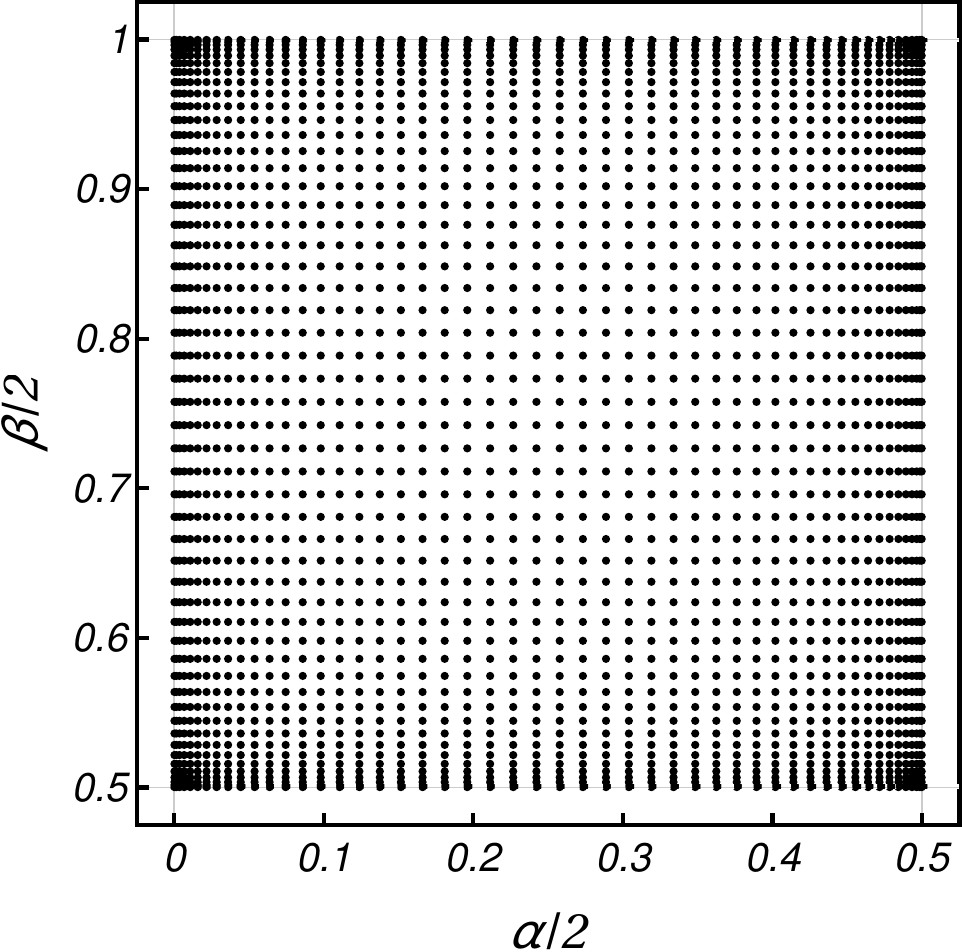}
		\caption{}
	\end{subfigure}
	\caption{Illustration of sampling nodal points in two-dimensional random space, using Smolyak sparse grid generator (a) $\, A(2,2)$, (b) $\, A(4,2)$ ,(c) $\, A(6,2)$; and (d) full tensor product rule with 50 points in each direction. The total number of points in each case is, 25, 161, 837, and 2500, respectively. }
	\label{Fig: Grids}
\end{figure}
%
%

\begin{table}[h]
	\centering
	\caption{\label{Table: number of sparse nodal points} The total number of nodal points in random space sampling, using Smolyak sparse grid generator and full tensor product with 10 points in each direction.}
	\label{my-label}
	\begin{tabular}{|c||c||c cccc}
		\hline
		Space dimensionality & Full tensor product & \multicolumn{5}{c|}{Smolyak sparse grid generator $A(w,\mathcal{N})$} \\ 
		\hline\hline
		$\mathcal{N}$ &  & \multicolumn{1}{c}{$w=2$} & \multicolumn{1}{c}{$w=4$} & \multicolumn{1}{c}{$w=6$} & \multicolumn{1}{c}{$w=8$}  & \multicolumn{1}{c|}{$w=10$}    \\ \hline
		2 & $10^2$ & \multicolumn{1}{c|}{25} & \multicolumn{1}{c|}{161} & \multicolumn{1}{c|}{837} & \multicolumn{1}{c|}{4105} & \multicolumn{1}{c|}{19469}   \\ \hline
		5 & $10^5$ & \multicolumn{1}{c|}{131} & \multicolumn{1}{c|}{3376} & \multicolumn{1}{c|}{45458} & \multicolumn{1}{c|}{440953} & \multicolumn{1}{c|}{3542465} \\ \hline
		15& $10^{15}$ & \multicolumn{1}{c|}{1066} & \multicolumn{1}{c|}{197176} & \multicolumn{1}{c|}{15480304} &  & \\ \cline{1-5}
		25& $10^{25}$ & \multicolumn{1}{c|}{2901} & \multicolumn{1}{c|}{1445975} &  &  &  \\ \cline{1-4}
		55& $10^{55}$ & \multicolumn{1}{c|}{87780} &  &  &   &  \\ \cline{1-3}
	\end{tabular}
\end{table}

%
\section{Forward Solver}
\label{Sec: Forward Solver}
%
For each realization of random variables in the employed sampling methods, the stochastic model yields a deterministic FPDE, left to be solved in the physical domain. We recall that for every $\boldsymbol{\xi}_i, \, i=1,2,\cdots$ in SFPDE \eqref{Doob_momentum-2}, the deterministic problem is recast as:
\begin{align}
\label{Eq: Deterministic FPDE}
\prescript{}{0}{\mathcal{D}}_{t}^{\alpha} u(t,\textbf{x}) 
& - \sum_{j=1}^{d} \, k_{j} \,\left[ \prescript{}{a_j}{\mathcal{D}}_{x_j}^{\beta_j}
+ \prescript{}{x_j}{\mathcal{D}}_{b_j}^{\beta_j} \right]
u(t,\textbf{x}) 
= h(t,\textbf{x}) +  f(t) ,
%
%
\end{align}
subject to the same initial/boundary conditions as \eqref{Eq: IC} and \eqref{Eq: BC}. In the sequel, we develop a Petrov-Galerkin spectral method to numerically solve the deterministic problem in the physical domain. We also show the wellposedness of deterministic problem in a weak sense and further investigate the stability of proposed numerical scheme.

%
\subsection{Mathematical Framework}
\label{Sec: Sol Test Space FPDE}
%
We define the useful functional spaces and their associated norms \cite{kharazmi2017petrov,Li2009}. By $H^\sigma(\mathbb{R}) = \big{\{} u(t) \vert u \in L^{2}(\mathbb{R});\, (1+\vert \omega \vert^2)^{\frac{\sigma}{2}} F(u)(\omega) \in L^{2}(\mathbb{R}) \big{\}}$, $\sigma \geq 0$, we denote the fractional Sobolev space on $\mathbb{R}$, endowed with norm $\Vert u \Vert_{H^{\sigma}_{\mathbb{R}}}=\Vert (1+\vert \omega \vert^2)^{\frac{\sigma}{2}} F(u)(\omega) \Vert_{L^{2}(\mathbb{R})}$, where $\mathcal{F}(u)$ represents the Fourier transform of $u$. Subsequently, we denote by $H^\sigma(\Lambda) =\big{\{} u\in L^{2}(\Lambda)\, \vert \,\exists \, \tilde{u} \in H^{\sigma}(\mathbb{R})\, \, s.t. \, \,\tilde{u}\vert_{\Lambda}=u \big{\}}$, $\sigma \geq 0$, the fractional Sobolev space on any finite closed interval, e.g. $\Lambda = (a,b)$, with norm $ \Vert u \Vert_{H^{\sigma}(\Lambda)}= \underset{\tilde{u}\in H^{\sigma}_{\mathbb{R}},\, \tilde{u}\vert_{\Lambda}=u }{\inf} \, \Vert \tilde{u} \Vert_{{}H^{\sigma}(\mathbb{R})}$. We define the following useful norms as:
\begin{align*}
&\Vert \cdot \Vert_{{^l}H^{\sigma}_{}(\Lambda)} = \Big(\Vert \prescript{}{a}{\mathcal{D}}_{x}^{\sigma}\, (\cdot)\Vert_{L^2(\Lambda)}^2+\Vert \cdot \Vert_{L^2(\Lambda)}^2 \Big)^{\frac{1}{2}},
\\
&\Vert \cdot \Vert_{{^r}H^{\sigma}_{}(\Lambda)} = \Big(\Vert \prescript{}{x}{\mathcal{D}}_{b}^{\sigma}\, (\cdot)\Vert_{L^2(\Lambda)}^2+\Vert \cdot \Vert_{L^2(\Lambda)}^2 \Big)^{\frac{1}{2}},
\\
&\Vert \cdot \Vert_{{^c}H^{\sigma}_{}(\Lambda)} = \Big(\Vert \prescript{}{x}{\mathcal{D}}_{b}^{\sigma}\, (\cdot)\Vert_{L^2(\Lambda)}^2+\Vert \prescript{}{a}{\mathcal{D}}_{x}^{\sigma}\, (\cdot)\Vert_{L^2(\Lambda)}^2+\Vert \cdot \Vert_{L^2(\Lambda)}^2 \Big)^{\frac{1}{2}},
\end{align*}
where the equivalence of $\Vert \cdot \Vert_{{^l}H^{\sigma}_{}(\Lambda)}$ and $\Vert \cdot \Vert_{{^r}H^{\sigma}_{}(\Lambda)}$ are shown in \cite{Li2009,ervin2007variational,Li2010}. 
\begin{lem}
	\label{Lem: norm equivalence 1}
	Let $\sigma \geq 0$ and $\sigma \neq n-\frac{1}{2}$. Then, the norms $\Vert \cdot \Vert_{{^l}H^{\sigma}_{}(\Lambda)}$ and $\Vert \cdot \Vert_{{^r}H^{\sigma}_{}(\Lambda)}$ are equivalent to $\Vert \cdot \Vert_{{^c}H^{\sigma}_{}(\Lambda)}$.
\end{lem}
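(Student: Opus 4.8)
The plan is to obtain the claimed equivalence as an almost immediate consequence of the equivalence of the one-sided norms $\Vert \cdot \Vert_{{^l}H^{\sigma}(\Lambda)}$ and $\Vert \cdot \Vert_{{^r}H^{\sigma}(\Lambda)}$ cited just above the statement, which is precisely the result valid under the hypothesis $\sigma \neq n-\tfrac{1}{2}$. The strategy is to show each one-sided norm is equivalent to the two-sided (central) norm $\Vert \cdot \Vert_{{^c}H^{\sigma}(\Lambda)}$; since $\Vert \cdot \Vert_{{^l}H^{\sigma}(\Lambda)}$ and $\Vert \cdot \Vert_{{^r}H^{\sigma}(\Lambda)}$ are themselves equivalent, it suffices to handle one of them and invoke symmetry.

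First I would dispatch the easy direction, which is purely definitional. Because $\Vert u \Vert_{{^c}H^{\sigma}(\Lambda)}^2$ is obtained from $\Vert u \Vert_{{^l}H^{\sigma}(\Lambda)}^2$ by adding the nonnegative term $\Vert \prescript{}{x}{\mathcal{D}}_{b}^{\sigma} u \Vert_{L^2(\Lambda)}^2$ (and likewise from the right norm by adding $\Vert \prescript{}{a}{\mathcal{D}}_{x}^{\sigma} u \Vert_{L^2(\Lambda)}^2$), we get $\Vert u \Vert_{{^l}H^{\sigma}(\Lambda)} \le \Vert u \Vert_{{^c}H^{\sigma}(\Lambda)}$ and $\Vert u \Vert_{{^r}H^{\sigma}(\Lambda)} \le \Vert u \Vert_{{^c}H^{\sigma}(\Lambda)}$, both with constant one. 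Only the reverse bounds carry content.

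For the reverse direction I would use the cited equivalence in the quantitative form: there is a constant $C=C(\sigma,\Lambda)>0$ with $\Vert u \Vert_{{^r}H^{\sigma}(\Lambda)} \le C\,\Vert u \Vert_{{^l}H^{\sigma}(\Lambda)}$, hence in particular $\Vert \prescript{}{x}{\mathcal{D}}_{b}^{\sigma} u \Vert_{L^2(\Lambda)}^2 \le C^2 \Vert u \Vert_{{^l}H^{\sigma}(\Lambda)}^2$. This is exactly the statement that the \emph{extra} right-derivative term appearing in the central norm is controlled by the left norm. Substituting into the definition of the central norm gives
\begin{equation*}
\Vert u \Vert_{{^c}H^{\sigma}(\Lambda)}^2 = \Vert u \Vert_{{^l}H^{\sigma}(\Lambda)}^2 + \Vert \prescript{}{x}{\mathcal{D}}_{b}^{\sigma} u \Vert_{L^2(\Lambda)}^2 \le (1+C^2)\,\Vert u \Vert_{{^l}H^{\sigma}(\Lambda)}^2 ,
\end{equation*}
so that $\Vert u \Vert_{{^c}H^{\sigma}(\Lambda)} \le \sqrt{1+C^2}\,\Vert u \Vert_{{^l}H^{\sigma}(\Lambda)}$. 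The symmetric computation, using $\Vert \prescript{}{a}{\mathcal{D}}_{x}^{\sigma} u \Vert_{L^2(\Lambda)} \le C\,\Vert u \Vert_{{^r}H^{\sigma}(\Lambda)}$, controls $\Vert u \Vert_{{^c}H^{\sigma}(\Lambda)}$ by the right norm.

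Combining the two directions yields $\Vert u \Vert_{{^l}H^{\sigma}(\Lambda)} \le \Vert u \Vert_{{^c}H^{\sigma}(\Lambda)} \le \sqrt{1+C^2}\,\Vert u \Vert_{{^l}H^{\sigma}(\Lambda)}$, and the analogous chain for the right norm, which is exactly the asserted equivalence. I do not anticipate a genuine obstacle: the whole argument is a transport of the already-established one-sided equivalence, and the hypothesis $\sigma \neq n-\tfrac{1}{2}$ enters solely to guarantee that this cited equivalence (and, underneath it, the equivalence with the standard fractional Sobolev seminorm) remains valid—at the half-integer values the relevant fractional Sobolev spaces degenerate and the equivalence constants break down. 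The only care needed is to keep the constants uniform in $u$, which they manifestly are since $C$ depends on $\sigma$ and $\Lambda$ alone.
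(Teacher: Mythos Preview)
Your argument is correct. The paper, however, does not supply its own proof of this lemma: it is stated without proof, immediately after the sentence citing \cite{Li2009,ervin2007variational,Li2010} for the equivalence of $\Vert \cdot \Vert_{{^l}H^{\sigma}(\Lambda)}$ and $\Vert \cdot \Vert_{{^r}H^{\sigma}(\Lambda)}$. Your derivation is precisely the intended one-line deduction from that cited equivalence, so there is nothing to compare against beyond noting that you have filled in what the paper leaves implicit.
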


\noindent We also define $C^{\infty}_{0}(\Lambda)$ as the space of smooth functions with compact support in $(a,b)$. We denote by $\prescript{l}{}H^{\sigma}_0(\Lambda)$, $\prescript{r}{}H^{\sigma}_0(\Lambda)$, and $\prescript{c}{}H^{\sigma}_0(\Lambda)$ as the closure of $C^{\infty}_{0}(\Lambda)$ with respect to the norms $\Vert \cdot \Vert_{{^l}H^{\sigma}_{}(\Lambda)}$, $\Vert \cdot \Vert_{{^r}H^{\sigma}_{}(\Lambda)}$, and $\Vert \cdot \Vert_{{^c}H^{\sigma}_{}(\Lambda)}$. It is shown in \cite{ervin2007variational,Li2010} that these Sobolev spaces are equal and their seminorms are also equivalent to $\vert \cdot \vert_{{}H^{\sigma}_{}(\Lambda)}^{*} = \big\vert \left( \prescript{}{a}{\mathcal{D}}_{x}^{\sigma}\, (\cdot), \prescript{}{x}{\mathcal{D}}_{b}^{\sigma}\, (\cdot) \right) \big\vert_{\Lambda}^{\frac{1}{2}}$. Therefore, we can prove that $\big{\vert}(\prescript{}{a}{\mathcal{D}}_{x}^{\sigma}\, u, \prescript{}{x}{\mathcal{D}}_{b}^{\sigma}\, v )_{\Lambda}^{} \big{\vert} \geq \beta \, \vert u \vert_{{^l}H^{\sigma}_{}(\Lambda)}\, \vert  v \vert_{{^r}H^{\sigma}_{}(\Lambda)}$ and $\big{\vert}(\prescript{}{x}{\mathcal{D}}_{b}^{\sigma}\, u, \prescript{}{a}{\mathcal{D}}_{x}^{\sigma}\, v )_{\Lambda}^{} \big{\vert} \geq \beta \, \vert u \vert_{{^r}H^{\sigma}_{}(\Lambda)}\, \vert  v \vert_{{^l}H^{\sigma}_{}(\Lambda)}$, in which $\beta$ is a positive constant.

Moreover, by letting ${_0}C^{\infty}(I)$ and $C^{\infty}_{0}(I)$ be the space of smooth functions with compact support in $(0,T]$ and $[0,T)$, respectively, we define $\prescript{l}{}H^{s}(I)$ and $\prescript{r}{}H^{s}(I)$ as the closure of ${_0}C^{\infty}(I)$ and $C^{\infty}_{0}(I)$ with respect to the norms $\Vert \cdot \Vert_{\prescript{l}{}H^{s}(I)}$ and $\Vert \cdot \Vert_{\prescript{r}{}H^{s}(I)}$. Other equivalent useful semi-norms associated with $H^s(I)$ are also introduced in \cite{Li2009,ervin2007variational}, as $\vert \cdot \vert_{{^l}H^{s}(I)} = \Vert \prescript{}{0}{\mathcal{D}}_{t}^{s} (\cdot)\Vert_{L^2(I)}$, $ \vert \cdot \vert_{{^r}H^{s}(I)} = \Vert \prescript{}{t}{\mathcal{D}}_{T}^{s} (\cdot) \Vert_{L^2(I)}$, $\vert \cdot \vert_{{}H^{s}(I)}^* = \big\vert \left( \prescript{}{0}{\mathcal{D}}_{t}^{s}(\cdot),\prescript{}{t}{\mathcal{D}}_{T}^{s}(\cdot) \right)_{I} \big\vert^{\frac{1}{2}}$, where $\vert \cdot \vert_{{}H^{s}(I)}^* \equiv \vert \cdot \vert_{{^l}H^{s}(I)}^{\frac{1}{2}} \, \vert \cdot \vert_{{^r}H^{s}(I)}^{\frac{1}{2}}$.

Borrowing definitions from \cite{samiee2016}, we define the following spaces, which we use later in construction of corresponding solution and test spaces of our problem. Thus, by letting $\Lambda_1 = (a_1,b_1)$, $\Lambda_j = (a_j,b_j) \times \Lambda_{j-1}$ for $j=2,\cdots,d$, we define $\mathcal{X}_1 = H^{\frac{\beta_1}{2}}_{0}(\Lambda_1)$, which is associated with the norm $ \Vert \cdot \Vert_{{^c}H^{\frac{\beta_1}{2}}_{}(\Lambda_1)}$, and accordingly, $\mathcal{X}_j, \, j=2,\cdots,d$ as
\begin{eqnarray}
\mathcal{X}_2 &=& H^{\frac{\beta_2}{2}}_0 \Big((a_2,b_2); L^2(\Lambda_1) \Big) \cap L^2((a_2,b_2); \mathcal{X}_1),
\\
&\vdots&
\nonumber
\\
\mathcal{X}_d &=& H^{\frac{\beta_d}{2}}_0 \Big((a_d,b_d); L^2(\Lambda_{d-1}) \Big) \cap L^2((a_d,b_d); \mathcal{X}_{d-1}),
\end{eqnarray}
associated with norms $\Vert \cdot \Vert_{\mathcal{X}_j} = \bigg{\{} \Vert \cdot \Vert_{H^{\frac{\beta_j}{2}}_0 \Big((a_j,b_j); L^2(\Lambda_{j-1}) \Big)}^2 + \Vert \cdot \Vert_{ L^2\Big((a_j,b_j); \mathcal{X}_{j-1}\Big)}^2 \bigg{\}}^{\frac{1}{2}}, \,\, j=2,3,\cdots,d$.

\vspace{0.1 in}
\begin{lem}
	\label{space norm 1}
	Let $\sigma \geq 0$ and $\sigma \neq n-\frac{1}{2}$. Then, for $j=1,2,\cdots,d$ 
	\begin{align*}
	%
	\Vert \cdot \Vert_{\mathcal{X}_j} \equiv \bigg{\{}  \sum_{i=1}^{j} \Big(\Vert \prescript{}{x_i}{\mathcal{D}}_{b_i}^{\beta_i/2}\, (\cdot)\Vert_{L^2(\Lambda_j)}^2+\Vert \prescript{}{a_i}{\mathcal{D}}_{x_i}^{\beta_i/2}\, (\cdot)\Vert_{L^2(\Lambda_j)}^2 \Big) + \Vert  \cdot \Vert_{L^2(\Lambda_j)}^2 \bigg{\}}^{\frac{1}{2}}.
	\end{align*}
\end{lem}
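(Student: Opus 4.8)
The plan is to proceed by induction on $j$, peeling off one spatial direction at a time so that the multidimensional equivalence reduces to the one-dimensional statement of Lemma \ref{Lem: norm equivalence 1}. The base case $j=1$ requires nothing: by definition $\mathcal{X}_1 = H^{\beta_1/2}_0(\Lambda_1)$ carries the norm $\Vert \cdot \Vert_{{^c}H^{\beta_1/2}(\Lambda_1)}$, which is literally the claimed right-hand side restricted to the single index $i=1$, so the relation holds there with equality. For the inductive step I assume the asserted equivalence on $\mathcal{X}_{j-1}$ over $\Lambda_{j-1}$ and start from the defining decomposition
\begin{align*}
\Vert u \Vert_{\mathcal{X}_j}^2 = \Vert u \Vert_{H^{\beta_j/2}_0((a_j,b_j);L^2(\Lambda_{j-1}))}^2 + \Vert u \Vert_{L^2((a_j,b_j);\mathcal{X}_{j-1})}^2 ,
\end{align*}
estimating the two summands separately.

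For the first summand I would apply Lemma \ref{Lem: norm equivalence 1} in the $x_j$ variable, viewing $u(x_j,\cdot)$ as an $L^2(\Lambda_{j-1})$-valued function; because the equivalence constants there depend only on $\beta_j/2$ and the interval $(a_j,b_j)$ and not on the function, they may be pulled through the $L^2(\Lambda_{j-1})$ integration. Combined with the tensor identity $L^2((a_j,b_j);L^2(\Lambda_{j-1})) = L^2(\Lambda_j)$, this gives
\begin{align*}
\Vert u \Vert_{H^{\beta_j/2}_0((a_j,b_j);L^2(\Lambda_{j-1}))}^2 \equiv \Vert \prescript{}{x_j}{\mathcal{D}}_{b_j}^{\beta_j/2} u\Vert_{L^2(\Lambda_j)}^2 + \Vert \prescript{}{a_j}{\mathcal{D}}_{x_j}^{\beta_j/2} u\Vert_{L^2(\Lambda_j)}^2 + \Vert u \Vert_{L^2(\Lambda_j)}^2 .
\end{align*}
For the second summand I invoke the inductive hypothesis pointwise in $x_j$ (with constants independent of $x_j$), integrate in $x_j$, and use Fubini together with the fact that each $\prescript{}{x_i}{\mathcal{D}}_{b_i}^{\beta_i/2}$ and $\prescript{}{a_i}{\mathcal{D}}_{x_i}^{\beta_i/2}$ with $i\le j-1$ acts only in the $x_i$ direction and hence commutes with integration over $x_j$. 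This yields
\begin{align*}
\Vert u \Vert_{L^2((a_j,b_j);\mathcal{X}_{j-1})}^2 \equiv \sum_{i=1}^{j-1} \Big( \Vert \prescript{}{x_i}{\mathcal{D}}_{b_i}^{\beta_i/2} u\Vert_{L^2(\Lambda_j)}^2 + \Vert \prescript{}{a_i}{\mathcal{D}}_{x_i}^{\beta_i/2} u\Vert_{L^2(\Lambda_j)}^2 \Big) + \Vert u \Vert_{L^2(\Lambda_j)}^2 .
\end{align*}
Adding the two equivalences and absorbing the duplicated $\Vert u \Vert_{L^2(\Lambda_j)}^2$ into a single copy (harmless up to constants) produces exactly the asserted expression summed over $i=1,\dots,j$, closing the induction.

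The step I expect to be the main obstacle is the rigorous transfer of Lemma \ref{Lem: norm equivalence 1}, which is stated for scalar functions on an interval, to the Bochner setting appearing in the first summand: one must verify that $u(x_j,\cdot)$ genuinely lies in the one-dimensional fractional Sobolev space in $x_j$, that $\prescript{}{x_j}{\mathcal{D}}_{b_j}^{\beta_j/2}$ commutes with evaluation in $\Lambda_{j-1}$ so that its $L^2((a_j,b_j);L^2(\Lambda_{j-1}))$ norm coincides with its $L^2(\Lambda_j)$ norm, and that the equivalence constants are uniform. This is precisely where the hypothesis $\sigma \neq n-\tfrac12$ is needed: it is imposed so that Lemma \ref{Lem: norm equivalence 1} is applicable to each relevant order $\sigma=\beta_i/2\in(\tfrac12,1)$, legitimizing every invocation, after which the remaining interchanges of norm and integral are routine Fubini arguments that the Hilbert-space structure renders identical to the scalar case.
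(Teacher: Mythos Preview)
The paper states this lemma without proof, so there is no argument to compare against directly. Your induction on $j$ is the natural route and is essentially correct: the recursive definition of $\mathcal{X}_j$ is tailor-made for exactly this peeling argument, the base case is literally the definition of $\Vert\cdot\Vert_{{^c}H^{\beta_1/2}(\Lambda_1)}$, and the inductive step combines Lemma~\ref{Lem: norm equivalence 1} in the outermost variable with the hypothesis on $\mathcal{X}_{j-1}$ in the remaining ones, just as you describe.

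One minor remark: you flag the Bochner-space transfer of Lemma~\ref{Lem: norm equivalence 1} as the main obstacle, and that is fair, but in this paper's setting it is not a genuine difficulty. The norms involved are all built from $L^2$ pairings, so the vector-valued version follows from the scalar one by writing $\Vert \prescript{}{a_j}{\mathcal{D}}_{x_j}^{\beta_j/2} u\Vert_{L^2((a_j,b_j);L^2(\Lambda_{j-1}))}^2 = \int_{\Lambda_{j-1}} \Vert \prescript{}{a_j}{\mathcal{D}}_{x_j}^{\beta_j/2} u(\cdot,x')\Vert_{L^2(a_j,b_j)}^2\,dx'$ and applying the scalar equivalence inside the integral; the constants from Lemma~\ref{Lem: norm equivalence 1} depend only on $\beta_j$ and the interval and hence pass through uniformly. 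Your Fubini justification for the second summand is likewise routine. The only cosmetic point is that the hypothesis $\sigma\neq n-\tfrac12$ is inherited verbatim from Lemma~\ref{Lem: norm equivalence 1} and is automatically satisfied here because each $\beta_i/2\in(\tfrac12,1)$, so you might simply note this rather than phrase it as a condition to be checked.
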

%

%
\subsection*{\textbf{Solution and Test Spaces}}
%
We define the ``solution space" $U$ and ``test space" $V$, respectively, as
\begin{align}
\label{Eq: solution test space}
U  = \prescript{l}{0}H^{\frac{\alpha}{2}}\Big(I; L^2(\Lambda_d) \Big) \cap L^2(I; \mathcal{X}_d),
\quad%
V  = \prescript{r}{0}H^{\frac{\alpha}{2}}\Big(I; L^2(\Lambda_d)\Big) \cap L^2(I; \mathcal{X}_d),
\end{align}
endowed with norms
\begin{align}
\label{Eq: solution test space norm}
\Vert u \Vert_{U} &= 
\Big{\{}\Vert u \Vert_{\prescript{l}{}H^{\frac{\alpha}{2}}(I; L^2(\Lambda_d))}^2 + \Vert u \Vert_{L^2(I; \mathcal{X}_d)}^2 \Big{\}}^{\frac{1}{2}}, 
\quad
\Vert v \Vert_{V} &= 
\Big{\{}\Vert v \Vert_{\prescript{r}{}H^{\tau}(I; L^2(\Lambda_d))}^2  + \Vert v \Vert_{ L^2(I; \mathcal{X}_d)}^2 \Big{\}}^{\frac{1}{2}},
\end{align}
where $I = [0,T]$, and 
\begin{align*}
\prescript{l}{0}H^{\frac{\alpha}{2}} \Big(I; L^2(\Lambda_d) \Big) = 
\Big{\{} u \, \big|\, \Vert u(t,\cdot) \Vert_{L^2(\Lambda_d)} \in H^{\frac{\alpha}{2}}(I), u\vert_{t=0}=u\vert_{x=a_j}=u\vert_{x=b_j}=0,\, j=1,2,\cdots,d  \Big{\}},
\\
\prescript{r}{0}H^{\frac{\alpha}{2}} \Big(I; L^2(\Lambda_d) \Big) = 
\Big{\{} v \,\big|\, \Vert v(t,\cdot) \Vert_{L^2(\Lambda_d)} \in H^{\frac{\alpha}{2}}(I), v\vert_{t=T}=v\vert_{x=a_j}=v\vert_{x=b_j}=0,\, j =1,2,\cdots,d  \Big{\}},
\end{align*}
equipped with norms $\Vert u \Vert_{\prescript{l}{}H^{\frac{\alpha}{2}}(I; L^2(\Lambda_d))}$ and $\Vert u \Vert_{\prescript{r}{}H^{\frac{\alpha}{2}}(I; L^2(\Lambda_d))}$, respectively. We can show that these norms take the following forms
\begin{align}
%
\label{space norm 2}
&
\Vert u \Vert_{\prescript{l}{}H^{\frac{\alpha}{2}}(I; L^2(\Lambda_d))} = \Big{\Vert} \, \Vert u(t,\cdot) \Vert_{L^2(\Lambda_d)}\, \Big{\Vert}_{{^l}H^{\frac{\alpha}{2}}(I)}
=\Big(\Vert \prescript{}{0}{\mathcal{D}}_{t}^{\frac{\alpha}{2}}\, (u)\Vert_{L^2(\Omega)}^2 + \Vert u\Vert_{L^2(\Omega)}^2 \Big)^{\frac{1}{2}},
\\
&\Vert u \Vert_{\prescript{r}{}H^{\frac{\alpha}{2}}(I; L^2(\Lambda_d))} = \Big{\Vert} \, \Vert u(t,\cdot) \Vert_{L^2(\Lambda_d)}\, \Big{\Vert}_{{^r}H^{\frac{\alpha}{2}}(I)}
= \Big(\Vert \prescript{}{t}{\mathcal{D}}_{T}^{\frac{\alpha}{2}}\, (u)\Vert_{L^2(\Omega)}^2+\Vert u\Vert_{L^2(\Omega)}^2\Big)^{\frac{1}{2}}.
\end{align}
Also, using Lemma \ref{space norm 1}, we can show that
\begin{align}
%
\label{space norm 3}
\Vert u \Vert_{L^2(I; \mathcal{X}_d)}
=
\Big{\Vert} \, \Vert u(t,.) \Vert_{\mathcal{X}_d}\,\Big{\Vert}_{L^2(I)}
=
\Big{\{}  \Vert u \Vert_{L^2(\Omega)}^2 + \sum_{j=1}^{d} \big( \Vert \prescript{}{x_j}{\mathcal{D}}_{b_j}^{\frac{\beta_j}{2}}\, (u)\Vert_{L^2(\Omega)}^2 
+ \Vert \prescript{}{a_j}{\mathcal{D}}_{x_j}^{\frac{\beta_j}{2}}\, (u)\Vert_{L^2(\Omega)}^2 \big) 
\Big{\}}^{\frac{1}{2}}.
\end{align}
Therefore, \eqref{Eq: solution test space norm} can be written as
\begin{align}
\label{Eq: solution space norm}
\Vert u \Vert_{U} 
&= 
\Big{\{}  \Vert u \Vert_{L^2(\Omega)}^2 + \Vert \prescript{}{0}{\mathcal{D}}_{t}^{\frac{\alpha}{2}}\, (u)\Vert_{L^2(\Omega)}^2 
+ \sum_{j=1}^{d} \big( \Vert \prescript{}{x_j}{\mathcal{D}}_{b_j}^{\frac{\beta_j}{2}}\, (u)\Vert_{L^2(\Omega)}^2+\Vert \prescript{}{a_j}{\mathcal{D}}_{x_j}^{\frac{\beta_j}{2}}\, (u)\Vert_{L^2(\Omega)}^2 \big) \Big{\}}^{\frac{1}{2}}, 
\\
\label{Eq: test space norm}
\Vert v \Vert_{V} 
& =
\Big{\{}  \Vert v \Vert_{L^2(\Omega)}^2 + \Vert \prescript{}{t}{\mathcal{D}}_{T}^{\frac{\alpha}{2}}\, (v)\Vert_{L^2(\Omega)}^2 
+ \sum_{j=1}^{d} \big( \Vert \prescript{}{x_j}{\mathcal{D}}_{b_j}^{\frac{\beta_j}{2}}\, (v)\Vert_{L^2(\Omega)}^2
+\Vert \prescript{}{a_j}{\mathcal{D}}_{x_j}^{\frac{\beta_j}{2}}\, (v)\Vert_{L^2(\Omega)}^2 \big) \Big{\}}^{\frac{1}{2}}.
\end{align}
%

%
%
\subsection{Weak Formulation}
\label{Sec: weak formulation}
%
%
The following lemmas help us obtain the weak formulation of deterministic problem in the physical domain and construct the numerical scheme.

%
\begin{lem}
	\label{Lem: left frac proj}
	\cite{Li2009}: For all $\alpha \in  (0,1)$, if $u \in H^1([0,T])$ such that $u(0)=0$, and $v \in H^{\alpha/2}([0,T])$, then $( \prescript{}{0}{ \mathcal{D}}_{t}^{\,\,\alpha} u, v )_{\Omega} =  (\, \prescript{}{0}{ \mathcal{D}}_{t}^{\,\,\alpha/2} u \,,\, \prescript{}{t}{ \mathcal{D}}_{T}^{\,\,\alpha/2} v\, )_{\Omega}$, where $(\cdot , \cdot)_{\Omega}$ represents the standard inner product in $\Omega=[0,T]$. 
\end{lem}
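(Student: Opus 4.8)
The plan is to prove the identity first for smooth data and then extend to $u\in H^1([0,T])$ with $u(0)=0$ and $v\in H^{\alpha/2}([0,T])$ by density, since both sides are continuous bilinear forms in the relevant norms. The two structural ingredients are the composition (index) law for Riemann--Liouville operators and the fractional integration-by-parts identity for fractional integrals, the latter being nothing more than Fubini's theorem (Dirichlet's interchange-of-order formula) applied to the integral kernels; this is also precisely where the left-sided operator turns into a right-sided one.

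First I would use the hypothesis $u(0)=0$ to linearize the derivatives. For $u\in H^1$ with $u(0)=0$, relation \eqref{Eq: Caputo vs. Riemann} shows that the Riemann--Liouville and Caputo derivatives coincide, so that both $\prescript{}{0}{\mathcal D}_t^{\alpha}u$ and $\prescript{}{0}{\mathcal D}_t^{\alpha/2}u$ reduce to left fractional integrals of $u'$: writing $\prescript{}{0}{\mathcal J}_t^{\mu}$ for the left Riemann--Liouville integral of order $\mu$, we have $\prescript{}{0}{\mathcal D}_t^{\alpha}u=\prescript{}{0}{\mathcal J}_t^{1-\alpha}u'$ and $\prescript{}{0}{\mathcal D}_t^{\alpha/2}u=\prescript{}{0}{\mathcal J}_t^{1-\alpha/2}u'$, both in $L^2(I)$. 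The target identity then reads $\big(\prescript{}{0}{\mathcal J}_t^{1-\alpha}u',\,v\big)_\Omega=\big(\prescript{}{0}{\mathcal J}_t^{1-\alpha/2}u',\,\prescript{}{t}{\mathcal D}_T^{\alpha/2}v\big)_\Omega$.

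Next I would transfer the integrals onto $v$ by the adjoint (Fubini) relation $\big(\prescript{}{0}{\mathcal J}_t^{\mu}f,\,g\big)_\Omega=\big(f,\,\prescript{}{t}{\mathcal J}_T^{\mu}g\big)_\Omega$, where $\prescript{}{t}{\mathcal J}_T^{\mu}$ is the right Riemann--Liouville integral. Applying it to both sides reduces the claim to the pointwise operator identity $\prescript{}{t}{\mathcal J}_T^{1-\alpha}v=\prescript{}{t}{\mathcal J}_T^{1-\alpha/2}\big(\prescript{}{t}{\mathcal D}_T^{\alpha/2}v\big)$ tested against $u'\in L^2$. This last identity is exactly the right-sided index law $\prescript{}{t}{\mathcal J}_T^{1-\alpha/2}\prescript{}{t}{\mathcal D}_T^{\alpha/2}=\prescript{}{t}{\mathcal J}_T^{1-\alpha}$ (since $(1-\alpha/2)-\alpha/2=1-\alpha$), which I would verify from the semigroup property of the fractional integrals together with the fact that the endpoint term at $t=T$ drops out for $v\in H^{\alpha/2}$. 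The direction reversal $\prescript{}{0}{}\!\rightarrow\prescript{}{t}{}$ throughout is the Fubini step; the same conclusion is seen most transparently on the whole line, where zero-extension and Parseval turn the statement into the symbol factorization $(i\omega)^{\alpha}=(i\omega)^{\alpha/2}\,\overline{(-i\omega)^{\alpha/2}}$.

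I expect the main obstacle to be the rigorous justification of the index law and the vanishing of the endpoint contribution for functions of low regularity---that is, making sense of $\prescript{}{t}{\mathcal J}_T^{1-\alpha/2}\prescript{}{t}{\mathcal D}_T^{\alpha/2}v$ and of the composition $\prescript{}{0}{\mathcal D}_t^{\alpha/2}\prescript{}{0}{\mathcal D}_t^{\alpha/2}u=\prescript{}{0}{\mathcal D}_t^{\alpha}u$ when $v$ is only in $H^{\alpha/2}$ and $u$ only in $H^1$. This is handled by first proving everything for $u,v$ smooth with support compactly contained in the interior of $I$ (where all operators, interchanges, and extensions are classically valid and the boundary terms are identically zero), and then passing to the limit using the density of such functions in $\{u\in H^1:u(0)=0\}$ and in $H^{\alpha/2}(I)$, together with the $L^2$-boundedness of $\prescript{}{0}{\mathcal D}_t^{\alpha/2}$ and $\prescript{}{t}{\mathcal D}_T^{\alpha/2}$ on $H^{\alpha/2}(I)$.
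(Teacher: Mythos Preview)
The paper does not supply its own proof of this lemma: it is quoted verbatim from \cite{Li2009} and used as a black box in deriving the weak formulation. So there is no in-paper argument to compare against; your proposal is effectively a reconstruction of the cited result.

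Your strategy is the standard one and is correct in outline: reduce to Caputo form via $u(0)=0$, use the $L^2$-adjointness $(\prescript{}{0}{\mathcal J}_t^{\mu}f,g)_I=(f,\prescript{}{t}{\mathcal J}_T^{\mu}g)_I$ (Fubini on the Abel kernel), and close with the semigroup identity $\prescript{}{t}{\mathcal J}_T^{1-\alpha/2}\,\prescript{}{t}{\mathcal D}_T^{\alpha/2}v=\prescript{}{t}{\mathcal J}_T^{1-\alpha}v$. One small repair: the density step as stated is slightly off for $u$. Smooth functions with support compactly contained in the \emph{open} interval are not dense in $\{u\in H^1(I):u(0)=0\}$, since that space allows $u(T)\neq 0$. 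Use instead $C^\infty(\overline I)\cap\{u(0)=0\}$ (or simply note that for $u\in H^1$ with $u(0)=0$ every manipulation you perform---writing $\prescript{}{0}{\mathcal D}_t^{\alpha}u=\prescript{}{0}{\mathcal J}_t^{1-\alpha}u'$, Fubini, and the semigroup law on the $v$-side---is already justified without approximating $u$ at all; only $v$ needs the density argument, and there $C_c^\infty((0,T))$ is dense in $H^{\alpha/2}(I)$ because $\alpha/2<1/2$). With that adjustment the argument goes through.
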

%
%
\begin{lem}
	\label{Lem: fractional integ-by-parts 1 and 2}
	\cite{kharazmi2017petrov}: Let $1 < \beta < 2$, $a$ and $b$ be arbitrary finite or infinite real numbers. Assume $u \in H^{\beta}(a,b)$ such that $u(a)=0$, also $\prescript{}{x}{\mathcal{D}}_{b}^{\beta/2}v$ is integrable in $(a,b)$ such that $v(b) = 0$. Then, $( \prescript{}{a}{\mathcal{D}}_{x}^{\beta} u \,,\,v ) = ( \prescript{}{a}{\mathcal{D}}_{x}^{\beta/2} u \,,\,\prescript{}{x}{\mathcal{D}}_{b}^{\beta/2} v )$.
\end{lem}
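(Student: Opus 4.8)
The plan is to reduce the order-$\beta$ identity to an order-$\beta/2$ integration-by-parts statement of exactly the type already recorded in Lemma~\ref{Lem: left frac proj}, by first splitting the left Riemann--Liouville derivative through the composition (index) law
\[
\prescript{}{a}{\mathcal{D}}_{x}^{\beta} u = \prescript{}{a}{\mathcal{D}}_{x}^{\beta/2}\bigl(\prescript{}{a}{\mathcal{D}}_{x}^{\beta/2} u\bigr),\qquad \tfrac{\beta}{2}\in\bigl(\tfrac12,1\bigr).
\]
First I would justify this decomposition. Writing $\prescript{}{a}{\mathcal{D}}_{x}^{\beta/2}=D\circ\prescript{}{a}{I}_{x}^{1-\beta/2}$ and applying the index law for Riemann--Liouville operators, the only obstruction is a boundary contribution proportional to $(x-a)^{-\beta/2}$ carrying the factor $u(a)$, which is precisely the singular term distinguishing the Riemann--Liouville and Caputo derivatives. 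Since $\beta/2>\tfrac12$, the function $(x-a)^{-\beta/2}$ fails to lie in $L^2(a,b)$, so the hypothesis $u(a)=0$ is exactly what is needed both to annihilate this term and to guarantee $\prescript{}{a}{\mathcal{D}}_{x}^{\beta/2}u\in L^2(a,b)$; together with $u\in H^{\beta}(a,b)$ this makes the composition valid and every inner product below finite.

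With $w:=\prescript{}{a}{\mathcal{D}}_{x}^{\beta/2}u\in L^2(a,b)$, the second step is to prove the order-$\beta/2$ adjoint relation $(\prescript{}{a}{\mathcal{D}}_{x}^{\beta/2}w,\,v)=(w,\,\prescript{}{x}{\mathcal{D}}_{b}^{\beta/2}v)$. I would expand the left side as $\int_a^b D\bigl(\prescript{}{a}{I}_{x}^{1-\beta/2}w\bigr)\,v\,dx$ and integrate by parts in the classical sense. The boundary term $\bigl[(\prescript{}{a}{I}_{x}^{1-\beta/2}w)\,v\bigr]_a^b$ vanishes at $x=a$ because the left fractional integral is zero there, and at $x=b$ because $v(b)=0$; this is where the right endpoint condition on $v$ enters. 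The remaining volume term $-\int_a^b (\prescript{}{a}{I}_{x}^{1-\beta/2}w)\,v'\,dx$ is handled by Fubini, i.e.\ fractional integration by parts for the \emph{integral} operators, transferring $\prescript{}{a}{I}_{x}^{1-\beta/2}$ to the right-sided integral $\prescript{}{x}{I}_{b}^{1-\beta/2}$ acting on $v'$, and then recombining into the right Riemann--Liouville derivative $\prescript{}{x}{\mathcal{D}}_{b}^{\beta/2}v$. The integrability hypothesis on $\prescript{}{x}{\mathcal{D}}_{b}^{\beta/2}v$ is invoked to justify the interchange of integration order and to ensure the final inner product converges. Chaining the two steps yields $(\prescript{}{a}{\mathcal{D}}_{x}^{\beta}u,v)=(\prescript{}{a}{\mathcal{D}}_{x}^{\beta/2}u,\prescript{}{x}{\mathcal{D}}_{b}^{\beta/2}v)$.

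I expect the main obstacle to be the boundary-term bookkeeping in the first step rather than the integration by parts itself: one must verify carefully that the singular $(x-a)^{-\beta/2}$ contribution is genuinely removed by $u(a)=0$, so that the decomposition holds as an $L^2$ identity and not merely formally, since for $\beta/2>\tfrac12$ this term is not square-integrable and would otherwise render both sides ill-defined. Once the composition is established on the correct function space, the second step is essentially the spatial analogue of Lemma~\ref{Lem: left frac proj} and follows the same template.
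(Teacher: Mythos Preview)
The paper does not supply a proof of this lemma; it is simply quoted from \cite{kharazmi2017petrov} and used as a black box in deriving the bilinear form, so there is no in-paper argument to compare against. Your two-step strategy---factor $\prescript{}{a}{\mathcal{D}}_{x}^{\beta}$ as $\prescript{}{a}{\mathcal{D}}_{x}^{\beta/2}\circ\prescript{}{a}{\mathcal{D}}_{x}^{\beta/2}$ using $u(a)=0$ to kill the singular $(x-a)^{-\beta/2}$ term, then transfer one half-order derivative to $v$ by classical integration by parts followed by the adjoint identity for fractional integrals---is the standard route to this result and is sound. One small point worth tightening: when you evaluate the boundary term $\bigl[(\prescript{}{a}{I}_{x}^{1-\beta/2}w)\,v\bigr]_a^b$ at $x=a$, the bare assertion that the fractional integral vanishes there needs the extra regularity of $w=\prescript{}{a}{\mathcal{D}}_{x}^{\beta/2}u$ inherited from $u\in H^{\beta}$ (since $1-\beta/2<\tfrac12$, an $L^2$ argument alone does not give continuity of the fractional integral at the endpoint); with that regularity in hand the limit is indeed zero and the rest of your argument goes through.
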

\begin{lem}
	\label{lem_generalize}
	Let $1<\beta_j<2$ for $j=1,2,\cdots,d$, and $u,v \in  \mathcal{X}_d$. Then,  
	\begin{align*}
	\big(\prescript{}{a_j}{\mathcal{D}}_{x_j}^{\beta_j} u, v\big)_{\Lambda_d}=\big(\prescript{}{a_j}{\mathcal{D}}_{x_j}^{\frac{\beta_j}{2}} u, \prescript{}{x_j}{\mathcal{D}}_{b_j}^{\frac{\beta_j}{2}} v\big)_{\Lambda_d},
	\qquad
	\big(\prescript{}{x_j}{\mathcal{D}}_{b_j}^{\beta_j} u, v\big)_{\Lambda_d}=\big(\prescript{}{x_j}{\mathcal{D}}_{b_j}^{\frac{\beta_j}{2}} u, \prescript{}{a_j}{\mathcal{D}}_{x_j}^{\frac{\beta_j}{2}} v\big)_{\Lambda_d}.
	\end{align*}
	%
	%
\end{lem}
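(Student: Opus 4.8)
The plan is to reduce the $d$-dimensional identity to the one-dimensional integration-by-parts formula of Lemma~\ref{Lem: fractional integ-by-parts 1 and 2} applied in the single direction $x_j$, integrating out the remaining $d-1$ coordinates by Fubini's theorem. I focus on the first identity (the left-sided derivative); the second follows by the symmetric argument, interchanging the roles of $\prescript{}{a_j}{\mathcal{D}}_{x_j}$ and $\prescript{}{x_j}{\mathcal{D}}_{b_j}$ and using that functions in $\mathcal{X}_d$ vanish on the entire boundary.

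First I would establish the identity on a dense subspace of smooth functions. Since $\mathcal{X}_d$ is, by construction, the closure of $C_0^{\infty}(\Lambda_d)$ under the norm identified in Lemma~\ref{space norm 1}, it suffices to prove the claim for $u,v \in C_0^{\infty}(\Lambda_d)$ and then pass to the limit. For such functions, writing $\mathbf{x} = (x_j, \mathbf{x}_{\hat{j}})$ with $\mathbf{x}_{\hat{j}}$ collecting the transverse coordinates and $\Lambda_{\hat{j}} = \prod_{i \neq j}(a_i,b_i)$, Fubini's theorem gives
\begin{align*}
\big(\prescript{}{a_j}{\mathcal{D}}_{x_j}^{\beta_j} u, v\big)_{\Lambda_d}
= \int_{\Lambda_{\hat{j}}} \left( \int_{a_j}^{b_j} \big(\prescript{}{a_j}{\mathcal{D}}_{x_j}^{\beta_j} u\big)(x_j,\mathbf{x}_{\hat{j}})\, v(x_j,\mathbf{x}_{\hat{j}})\, dx_j \right) d\mathbf{x}_{\hat{j}}.
\end{align*}
For each fixed $\mathbf{x}_{\hat{j}}$ the slice $x_j \mapsto u(x_j,\mathbf{x}_{\hat{j}})$ lies in $H^{\beta_j}(a_j,b_j)$ and vanishes at $x_j = a_j$, while the corresponding slice of $v$ vanishes at $x_j = b_j$; these are exactly the hypotheses of Lemma~\ref{Lem: fractional integ-by-parts 1 and 2}, whose application to the inner integral yields
\begin{align*}
\int_{a_j}^{b_j} \big(\prescript{}{a_j}{\mathcal{D}}_{x_j}^{\beta_j} u\big)\, v\, dx_j = \int_{a_j}^{b_j} \big(\prescript{}{a_j}{\mathcal{D}}_{x_j}^{\beta_j/2} u\big)\big(\prescript{}{x_j}{\mathcal{D}}_{b_j}^{\beta_j/2} v\big)\, dx_j .
\end{align*}
Reassembling the outer integral by Fubini reproduces precisely $\big(\prescript{}{a_j}{\mathcal{D}}_{x_j}^{\beta_j/2} u, \prescript{}{x_j}{\mathcal{D}}_{b_j}^{\beta_j/2} v\big)_{\Lambda_d}$.

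To extend from smooth functions to all $u,v \in \mathcal{X}_d$ I would argue by continuity. The right-hand side is a bounded bilinear form on $\mathcal{X}_d \times \mathcal{X}_d$: by Cauchy--Schwarz it is controlled by $\Vert \prescript{}{a_j}{\mathcal{D}}_{x_j}^{\beta_j/2} u \Vert_{L^2(\Lambda_d)}\, \Vert \prescript{}{x_j}{\mathcal{D}}_{b_j}^{\beta_j/2} v \Vert_{L^2(\Lambda_d)}$, and both factors are dominated by the $\mathcal{X}_d$-norm of Lemma~\ref{space norm 1}. Choosing sequences $u_n,v_n \in C_0^{\infty}(\Lambda_d)$ converging to $u,v$ in $\mathcal{X}_d$, both sides converge and the identity passes to the limit, the left-hand side being understood consistently as the resulting continuous pairing, i.e. the action of the distributional derivative $\prescript{}{a_j}{\mathcal{D}}_{x_j}^{\beta_j} u$ on $v$.

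The main obstacle is this very regularity mismatch: members of $\mathcal{X}_d$ possess only half-order smoothness $\beta_j/2$ in the $x_j$ direction, so the full-order operator $\prescript{}{a_j}{\mathcal{D}}_{x_j}^{\beta_j}$ on the left is not classically defined and Lemma~\ref{Lem: fractional integ-by-parts 1 and 2} cannot be invoked directly for an arbitrary $u \in \mathcal{X}_d$; the density step is what renders the statement meaningful. A secondary technical point is justifying the slice-wise hypotheses for almost every $\mathbf{x}_{\hat{j}}$ --- that the traces at $x_j = a_j,b_j$ vanish and that the one-dimensional slices carry the stated regularity --- which follows from the anisotropic Bochner-space structure in the recursive definition of $\mathcal{X}_d$, whose $H^{\beta_j/2}_0$ factor encodes the vanishing trace in the $x_j$ variable.
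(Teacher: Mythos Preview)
The paper states Lemma~\ref{lem_generalize} without proof (and without citation), so there is no argument to compare against directly. Your approach --- fix the transverse coordinates, apply the one-dimensional Lemma~\ref{Lem: fractional integ-by-parts 1 and 2} slice-wise via Fubini, and extend by density from $C_0^\infty(\Lambda_d)$ --- is the natural one and is correct.

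One remark: the regularity mismatch you flag is real, but in the paper's workflow the lemma is invoked only to pass from the strong equation \eqref{Eq: Deterministic FPDE} to the bilinear form \eqref{Eq: bilinear form}, where $u$ is tacitly a strong solution with full $\beta_j$-regularity in each $x_j$. So the density-and-continuity step, while mathematically the right way to give the identity meaning on all of $\mathcal{X}_d$, is more than the paper actually needs; for its purposes the smooth-function computation you carry out already suffices. Your more careful formulation --- interpreting the left side as the continuous extension of the pairing --- is the honest way to state the result at the generality claimed.
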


For any realization of \eqref{Doob_momentum-2}, we obtain the weak system, i.e. the variational form of the deterministic counterpart of the problem problem, subject to the given initial/boundary conditions, by multiplying the equation with proper test functions and integrate over the whole computational domain $\mathbb{D}$. Using Lemmas \ref{Lem: left frac proj}-\ref{lem_generalize}, the bilinear form can be written as
\begin{align}
%
\label{Eq: bilinear form}
a(u,v)
=(\prescript{}{0}{\mathcal{D}}_{t}^{\frac{\alpha}{2}}\, u, \prescript{}{t}{\mathcal{D}}_{T}^{\frac{\alpha}{2}}\, v )_{\mathbb{D}} 
-\sum_{j=1}^{d} 
k_{j} \Big[ ( \prescript{}{a_j}{\mathcal{D}}_{x_j}^{\frac{\beta_j}{2}}\, u,\, \prescript{}{x_j}{\mathcal{D}}_{b_j}^{\frac{\beta_j}{2}}\, v )_{\mathbb{D}}
+ ( \prescript{}{x_j}{\mathcal{D}}_{b_j}^{\frac{\beta_j}{2}}\, u , \, \prescript{}{a_j}{\mathcal{D}}_{x_j}^{\frac{\beta_j}{2}} v)_{\mathbb{D}}
\Big] ,
\end{align}
and thus, by letting $U$ and $V$ be the proper solution/test spaces, the problem reads as: find $u \in U$ such that
\begin{align}
\label{Eq: general weak form FPDE}
a(u,v) = (\text{f},v)_{\mathbb{D}}, \quad \forall v \in V ,
\end{align}
where $\text{f} =  h(t,\textbf{x}) +  f(t) $.

%
%
\subsection{Petrov-Galerkin Spectral Method}
\label{Sec: Implementation}
%
%
We define the following finite dimensional solution and test spaces. We employ Legendre polynomials $\phi_{m_j}(\xi), \, j=1,2,\cdots,d$, and Jacobi poly-fractonomial of first kind $\psi^{\tau}_n(\eta)$ \cite{zayernouri2015tempered,Zayernouri2013}, as the spatial and temporal bases, respectively, given in their corresponding standard domain as
\begin{align}
\label{Eq: Spatial Basis}
\phi^{}_{m_j} ( \xi )  & =  \sigma_{m_j} \big{(} P_{m_j+1} (\xi) - P_{m_j-1} (\xi)\big{)},  \quad  \xi \in [-1,1]  \qquad m_j=1,2,\cdots ,
\\
\label{Eq: Temporal Basis}
\psi^{\tau}_n(\eta) & = {\sigma}_{n} \prescript{(1)}{}{ \mathcal{P}}_{n}^{\,\,\tau}(\eta) = {\sigma}_{n} (1+\eta)^{\tau} P_{n-1}^{-\tau, \tau} (\eta), \quad \eta\in [-1,1]  \quad n=1,2,\cdots ,
\end{align}
in which $\sigma_{m_j} = 2 + (-1)^{m_j}$. Therefore, by performing affine mappings $\eta = 2\frac{t}{T}-1$ and $\xi = 2\frac{x-a_j}{b_j-a_j} -1$ from the computational domain to the standard domain, we construct the solution space $U_N$ as
\begin{align}
\label{Eq: Solution Space :PG}
U_N = 
span \, \Big\{ \,\,   
\Big( \psi^{\,\tau}_n \circ \eta \Big) ( t ) \,\,
\prod_{j=1}^{d} \Big( \phi^{}_{m_j} \circ \xi \Big)  (x_j) \,\,
: n = 1,2, \cdots, \mathcal{N}, \,\, m_j= 1,2, \cdots, \mathcal{M}_j
\,\, \Big\}.
\end{align}
We note that the choice of temporal and spatial basis functions naturally satisfy the initial and boundary conditions, respectively. The parameter $\tau$ in the temporal basis functions plays a role of fine tunning parameter, which can be chosen properly to capture the singularity of exact solution.

Moreover, we employ Legendre polynomials $\Phi_{r_j}(\xi), \, j=1,2,\cdots,d$, and Jacobi poly-fractonomial of second kind $\Psi^{\tau}_k(\eta)$, as the spatial and temporal test functions, respectively, given in their corresponding standard domain as
\begin{align}
\label{Eq: Spatial Test}
\Phi_{r_j} ( \xi )  & =  \widetilde{\sigma}_{r_j} \big{(} P_{r_j+1}^{} (\xi) - P_{r_j-1}^{} (\xi)\big{)},  \quad  \xi \in [-1,1]  \qquad r_j =1,2,\cdots ,
\\
\label{Eq: Temporal Test}
\Psi^{\tau}_k(\eta) & = \widetilde{\sigma}_{k} \prescript{(2)}{}{ \mathcal{P}}_{k}^{\,\,\tau}(\eta) = \widetilde{\sigma}_{k} (1-\eta)^{\tau}\, P_{k-1}^{\tau,-\tau} (\eta), \quad \eta\in [-1,1]  \quad k=1,2,\cdots,
\end{align}
where $ \widetilde{\sigma}_{r_j} = 2\,(-1)^{r_j} + 1$. Therefore, by similar affine mapping we construct the test space $V_N$ as
\begin{align}
\label{Eq: Test Space: PG}
V_N = span \, \Big\{  \,\,
\Big(\Psi^{\tau}_k \circ \eta\Big)(t) \,\,
\prod_{j=1}^{d} \Big( \Phi^{}_{r_j} \circ \xi_j\Big)(x_j) \,\,
: k = 1,2, \cdots, \mathcal{N}, \,\, r_j= 1,2, \cdots, \mathcal{M}_j
\,\, \Big\}.
\end{align}
Thus, since $U_N \subset U$ and $V_N \subset  V$, the problems \eqref{Eq: general weak form FPDE} read as: find $u_N \in U_N$ such that
\begin{align}
\label{Eq: PG method FPDE}
a_h(u_N,v_N) = l(v_N), \quad \forall v_N \in V_N,
\end{align}
where $l(v_N) = (\text{f},v_N)$. The discrete bilinear form $a_h(u_N,v_N)$ can be written as 
\begin{align}
\label{Eq: discrete weak form}
a_h(u_N,v_N)
=(\prescript{}{0}{\mathcal{D}}_{t}^{\frac{\alpha}{2}}\, u_N, \prescript{}{t}{\mathcal{D}}_{T}^{\frac{\alpha}{2}}\, v_N )_{\mathbb{D}} 
-\sum_{j=1}^{d} 
k_{j} \Big[ ( \prescript{}{a_j}{\mathcal{D}}_{x_j}^{\frac{\beta_j}{2}}\, u_N ,\, \prescript{}{x_j}{\mathcal{D}}_{b_j}^{\frac{\beta_j}{2}}\, v_N )_{\mathbb{D}}
+ ( \prescript{}{x_j}{\mathcal{D}}_{b_j}^{\frac{\beta_j}{2}}\, u_N , \, \prescript{}{a_j}{\mathcal{D}}_{x_j}^{\frac{\beta_j}{2}} v_N)_{\mathbb{D}}
\Big].
\end{align}
We expand the approximate solution $u_N \in U_N$, satisfying the discrete bilinear form \eqref{Eq: discrete weak form}, in the following form
\begin{align}
\label{Eq: PG expansion}
u_{N}(t,\textbf{x}) = 
\sum_{n=1}^\mathcal{N}
\sum_{m_1=1}^{\mathcal{M}_1}
\cdots 
\sum_{m_d= 1}^{\mathcal{M}_d}  \,\,
\hat u_{ n,m_1,\cdots,m_d} \,\,
\Big[\psi^{\tau}_n(t)
\prod_{j=1}^{d} \phi^{}_{m_j}(x_j)
\Big] ,
\end{align}
and obtain the corresponding Lyapunov system by substituting \eqref{Eq: PG expansion} into \eqref{Eq: discrete weak form} by choosing $v_N(t,\textbf{x}) = \Psi^{\tau}_k(t) \prod_{j=1}^{d} \Phi^{}_{r_j}(x_j)$, $k = 1,2, \dots, \mathcal{N}$, $r_j= 1,2, \dots, \mathcal{M}_j$. Therefore, 
\begin{align}
\label{Eq: general Lyapunov}
\Big[
S_{T} \otimes M_1 \otimes M_2 \cdots \otimes M_d 
&+
\sum_{j=1}^{d} 
M_{T} \otimes M_1\otimes \cdots   \otimes M_{j-1} \otimes S_{j}^{{Tot}} \otimes M_{j+1}  \cdots \otimes M_d
\nonumber
\\
&+ 
\gamma \, M_{T}\otimes M_1 \otimes M_2 \cdots \otimes M_d 
\Big] \, 
\mathcal{U}= F,
\end{align} 
in which $\otimes$ represents the Kronecker product, $F$ denotes the multi-dimensional load matrix whose entries are given as
\begin{eqnarray}
\label{Eq: general load matrix}
F_{k,r_1,\cdots, r_d} = \int_{\mathbb{D}}^{} \text{f}(t,\textbf{x}) \,
\Big(
\Psi^{\,\tau}_k \circ \eta \Big)(t)
\prod_{j=1}^{d} \Big(\Phi^{}_{r_j} \circ \xi_j\Big)(x_j)\, 
d\mathbb{D},
\end{eqnarray}
and $\mathcal{U}$ is the matrix of unknown coefficients. The matrices $S_{T}$ and $M_{T}$ denote the temporal stiffness and mass matrices, respectively; and the matrices $S_{j}$ and $M_j$ denote the spatial stiffness and mass matrices, respectively. We obtain the entries of spatial mass matrix $M_j$ analytically and employ proper quadrature rules to accurately compute the entries of other matrices $S_{T}$, $M_{T}$ and $S_{j}$.

We note that the choices of basis/test functions, employed in developing the PG scheme leads to symmetric mass and stiffness matrices, providing useful properties to further develop a fast solver. The following Theorem \ref{Thm: fast solver} provides a unified fast solver, developed in terms of the generalized eigensolutions in order to obtain a closed-form solution to the Lyapunov system \eqref{Eq: general Lyapunov}.

\begin{thm}[Unified Fast FPDE Solver \cite{samiee2016}]
	\label{Thm: fast solver}
	Let $\{ {\vec{e}_{}}^{\,\,\mu_j}    ,    \lambda^{}_{m_j}\,  \}_{m_j=1}^{\mathcal{M}_j}$ be the set of general eigen-solutions of the spatial stiffness matrix $S^{Tot}_j$ with respect to the mass matrix $M_{j}$. Moreover, let $\{ {\vec{e}_{n}}^{\,\,\tau}    ,    \lambda^{\tau}_{n}\,  \}_{n=1}^{\mathcal{N}}$ be the set of general eigen-solutions of the temporal mass matrix $M_{T}$ with respect to the stiffness matrix $S_{T}$. Then, the matrix of unknown coefficients $\mathcal{U}$ is explicitly obtained as
	\begin{equation}
	\label{Eq: thm u expression in terms of k}
	\mathcal{U} = 
	\sum_{n=1}^{\mathcal{N}}
	\,\,
	\sum_{m_1= 1}^{\mathcal{M}_1}
	\cdots 
	\sum_{m_d= 1}^{\mathcal{M}_d}
	\kappa_{ n,m_1,\cdots,\,m_d  } \,
	\,\vec{e}_n^{\,\,\tau}\,
	\otimes
	\,{\vec{e}_{m_1}}^{}\,\,
	\otimes
	\cdots
	\otimes
	\,{\vec{e}_{m_d}}^{},
	\end{equation}
	where $\kappa_{ n,m_1,\cdots,\,m_d }$ is given by 
	\begin{eqnarray}
	\label{Eq: thm k fraction_1}
	\kappa_{ n,m_1,\cdots,\,m_d  } =  \frac{(\,\vec{e}_n^{\,\,\tau}
		\,{\vec{e}_{m_1}}^{}
		\cdots
		\,{\vec{e}_{m_d}}^{}) F}
	{
		\Big[
		(\vec{e}_n^{\,\,\tau^T} \, S_{T} \, \vec{e}_n^{\,\,\tau}) \,
		\prod_{j=1}^{d} (\vec{e}_{m_j}^{T}  \,  M_{j} \,  {\vec{e}_{m_j}}^{}) \,
		\Big]
		\Lambda_{n,m_1,\cdots,m_d}
	},
	\end{eqnarray}
	in which the numerator represents the standard multi-dimensional inner product, and $\Lambda_{n,m_1,\cdots,m_d}$ is obtained in terms of the eigenvalues of all mass matrices as
	\begin{eqnarray}
	\nonumber
	&\Lambda_{n,m_1,\cdots, m_d} = \Big[
	(1+\gamma\,\, 
	\lambda^{\tau}_n)
	+
	\lambda^{\tau}_n
	\sum_{j=1}^{d}
	(
	\lambda^{}_{m_j}
	)
	\Big].  &
	\end{eqnarray}
	
\end{thm}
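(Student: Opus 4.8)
The plan is to simultaneously diagonalize the Lyapunov system \eqref{Eq: general Lyapunov} in every tensor factor by expanding $\mathcal{U}$ in the basis of generalized eigenvectors, and then to read off each coefficient $\kappa_{n,m_1,\cdots,m_d}$ by a biorthogonality (testing) argument. First I would record the two generalized eigenproblems named in the hypotheses, namely the spatial relations $S_j^{Tot}\,\vec{e}_{m_j} = \lambda_{m_j}\, M_j\, \vec{e}_{m_j}$ for $j=1,\cdots,d$ and the temporal relation $M_T\,\vec{e}_n^{\,\tau} = \lambda_n^{\tau}\, S_T\, \vec{e}_n^{\,\tau}$. Because the Legendre/poly-fractonomial trial and test bases render all of $S_T, M_T, S_j^{Tot}, M_j$ symmetric, and the mass matrices $M_j$ together with $S_T$ are symmetric positive definite, each of these problems possesses a complete set of eigenvectors that are orthogonal with respect to both the corresponding mass and stiffness matrices; in particular $\vec{e}_{m_j}^{T} M_j \vec{e}_{m_j'} = 0$ and $(\vec{e}_n^{\,\tau})^{T} S_T \vec{e}_{n'}^{\,\tau} = 0$ whenever the indices differ. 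This completeness is precisely what justifies the tensor-product ansatz \eqref{Eq: thm u expression in terms of k}, since the products $\vec{e}_n^{\,\tau} \otimes \vec{e}_{m_1} \otimes \cdots \otimes \vec{e}_{m_d}$ then span the entire solution space.

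Next I would substitute \eqref{Eq: thm u expression in terms of k} into the left-hand side of \eqref{Eq: general Lyapunov} and apply the Kronecker mixed-product identity $(A_0 \otimes \cdots \otimes A_d)(\vec{b}_0 \otimes \cdots \otimes \vec{b}_d) = (A_0 \vec{b}_0) \otimes \cdots \otimes (A_d \vec{b}_d)$ to each of the three groups of terms. Rewriting $M_T \vec{e}_n^{\,\tau} = \lambda_n^{\tau} S_T \vec{e}_n^{\,\tau}$ and $S_j^{Tot} \vec{e}_{m_j} = \lambda_{m_j} M_j \vec{e}_{m_j}$ via the eigenrelations, every summand collapses onto the single common tensor $(S_T \vec{e}_n^{\,\tau}) \otimes (M_1 \vec{e}_{m_1}) \otimes \cdots \otimes (M_d \vec{e}_{m_d})$: the first term contributes the factor $1$, the $j$-th diffusion term contributes $\lambda_n^{\tau} \lambda_{m_j}$, and the reaction term contributes $\gamma \lambda_n^{\tau}$. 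Summing these scalars yields exactly $\Lambda_{n,m_1,\cdots,m_d} = (1 + \gamma \lambda_n^{\tau}) + \lambda_n^{\tau} \sum_{j=1}^{d} \lambda_{m_j}$, so the action of the system matrix on each basis tensor is diagonal, up to the replacement of $\vec{e}$ by its $M_j$- (resp. $S_T$-) weighted image.

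Finally, to isolate the coefficients I would left-multiply the resulting identity by $(\vec{e}_n^{\,\tau} \otimes \vec{e}_{m_1} \otimes \cdots \otimes \vec{e}_{m_d})^{T}$ and invoke the biorthogonality from the first step: the pairing of this vector with $(S_T \vec{e}_{n'}^{\,\tau}) \otimes (M_1 \vec{e}_{m_1'}) \otimes \cdots$ factorizes as $\big((\vec{e}_n^{\,\tau})^{T} S_T \vec{e}_{n'}^{\,\tau}\big) \prod_{j=1}^{d} \big(\vec{e}_{m_j}^{T} M_j \vec{e}_{m_j'}\big)$, which vanishes unless all indices coincide. Only the diagonal term survives, leaving $\kappa_{n,m_1,\cdots,m_d}\, \Lambda_{n,m_1,\cdots,m_d}\, \big((\vec{e}_n^{\,\tau})^{T} S_T \vec{e}_n^{\,\tau}\big) \prod_{j=1}^d \big(\vec{e}_{m_j}^{T} M_j \vec{e}_{m_j}\big)$ on the left and the multidimensional inner product $(\vec{e}_n^{\,\tau} \otimes \vec{e}_{m_1} \otimes \cdots) F$ on the right; dividing gives the stated formula \eqref{Eq: thm k fraction_1}. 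The main obstacle, and the only nonroutine point, is the first step: one must verify that symmetry together with positive-definiteness of the mass matrices delivers a simultaneously mass- and stiffness-orthogonal complete eigenbasis in each factor, and that this forces the denominators (including $\Lambda_{n,m_1,\cdots,m_d}$) to be nonzero. Once this spectral fact is secured, the remaining Kronecker algebra and the testing step are entirely mechanical.
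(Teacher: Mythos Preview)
Your proposal is correct and follows exactly the standard eigendecomposition argument for tensor-product Lyapunov systems. Note, however, that this paper does not actually supply a proof of Theorem~\ref{Thm: fast solver}: the result is quoted from \cite{samiee2016} and stated without derivation, so there is no in-paper proof to compare against; your outline---simultaneous diagonalization via the generalized eigenpairs, the Kronecker mixed-product identity to collapse every term onto $(S_T\vec{e}_n^{\,\tau})\otimes(M_1\vec{e}_{m_1})\otimes\cdots\otimes(M_d\vec{e}_{m_d})$, and biorthogonal testing to isolate $\kappa$---is precisely the argument one finds in that reference.
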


%
%
%
%
%
\subsection{Stability Analysis}
\label{Sec: Stability and Convergence of PG}
%
%

We show the well-posedness of deterministic problem and prove the stability of proposed PG scheme. 
\begin{lem}
	\label{norm_223}
	Let $\alpha \in (0,1)$, $\Omega=I \times \Lambda_d$, and $u\in \prescript{l}{0}H^{\alpha/2}(I; L^2(\Lambda_d))$. Then, 
	\begin{equation*}
	\big\vert \left( \prescript{}{0}{\mathcal{D}}_{t}^{\alpha/2} u, \prescript{}{t}{\mathcal{D}}_{T}^{\alpha/2} v \right)_{\Omega} \big\vert \equiv \Vert u \Vert_{\prescript{l}{}H^{\alpha/2}(I; L^2(\Lambda_d))} \, \Vert v \Vert_{\prescript{r}{}H^{\alpha/2}(I; L^2(\Lambda_d))},
	\quad 
	\forall v \in \prescript{r}{0}H^{\alpha/2}(I; L^2(\Lambda_d)).
	\end{equation*}
\end{lem}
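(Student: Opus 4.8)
The plan is to exploit the tensor-product structure of $\Omega = I\times\Lambda_d$ together with the fact that both fractional operators in the pairing act only in the temporal variable, so that the estimate collapses to a one-dimensional statement on $I$ integrated against $L^2(\Lambda_d)$. First I would invoke Fubini to write
\begin{equation*}
\left(\prescript{}{0}{\mathcal{D}}_{t}^{\alpha/2} u,\,\prescript{}{t}{\mathcal{D}}_{T}^{\alpha/2} v\right)_{\Omega}=\int_{\Lambda_d}\left(\prescript{}{0}{\mathcal{D}}_{t}^{\alpha/2} u(\cdot,\textbf{x}),\,\prescript{}{t}{\mathcal{D}}_{T}^{\alpha/2} v(\cdot,\textbf{x})\right)_{I}\,d\textbf{x},
\end{equation*}
which is legitimate because the operators commute with the $\textbf{x}$-integration and, for a.e.\ $\textbf{x}$, the slices $u(\cdot,\textbf{x})$ and $v(\cdot,\textbf{x})$ lie in $\prescript{l}{}H^{\alpha/2}(I)$ and $\prescript{r}{}H^{\alpha/2}(I)$ respectively.

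For the boundedness (upper) half of the equivalence I would apply the Cauchy--Schwarz inequality to the inner temporal pairing, giving $\big\vert(\prescript{}{0}{\mathcal{D}}_{t}^{\alpha/2} u,\prescript{}{t}{\mathcal{D}}_{T}^{\alpha/2} v)_{I}\big\vert\le \vert u(\cdot,\textbf{x})\vert_{{^l}H^{\alpha/2}(I)}\,\vert v(\cdot,\textbf{x})\vert_{{^r}H^{\alpha/2}(I)}$ pointwise in $\textbf{x}$, and then a second Cauchy--Schwarz in the $\textbf{x}$-integral to factor the right-hand side. Identifying the factors with $\Vert u\Vert_{\prescript{l}{}H^{\alpha/2}(I;L^2(\Lambda_d))}$ and $\Vert v\Vert_{\prescript{r}{}H^{\alpha/2}(I;L^2(\Lambda_d))}$ through the explicit forms of these norms in \eqref{space norm 2} closes this direction.

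For the matching lower bound, which carries the real content, I would pass to the whole line: since $u\vert_{t=0}=0$ and $v\vert_{t=T}=0$, the zero-extensions of the slices belong to $H^{\alpha/2}(\mathbb{R})$, and the Fourier symbols of the left and right Riemann--Liouville derivatives of order $\alpha/2$ are $(i\omega)^{\alpha/2}$ and $(-i\omega)^{\alpha/2}$. Using $\overline{(-i\omega)^{\alpha/2}}=(i\omega)^{\alpha/2}$ together with Parseval, the diagonal pairing $(\prescript{}{0}{\mathcal{D}}_{t}^{\alpha/2} w,\prescript{}{t}{\mathcal{D}}_{T}^{\alpha/2} w)_{I}$ reduces to $\int_{\mathbb{R}}(i\omega)^{\alpha}\vert\widehat{w}\vert^2\,d\omega$, whose real part equals $\cos(\tfrac{\alpha\pi}{2})\,\Vert\,\vert\omega\vert^{\alpha/2}\widehat{w}\,\Vert_{L^2(\mathbb{R})}^2$ and is strictly positive because $\alpha\in(0,1)$ forces $\cos(\tfrac{\alpha\pi}{2})>0$. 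This is precisely the seminorm identity $\vert\cdot\vert^{*}_{{}H^{\alpha/2}(I)}\equiv\vert\cdot\vert^{1/2}_{{^l}H^{\alpha/2}(I)}\,\vert\cdot\vert^{1/2}_{{^r}H^{\alpha/2}(I)}$ recorded in the preliminaries, which, upgraded by the equivalence of the left and right temporal seminorms (the temporal analogue of Lemma \ref{Lem: norm equivalence 1}), controls $\vert w(\cdot,\textbf{x})\vert_{{^l}H^{\alpha/2}(I)}\,\vert w(\cdot,\textbf{x})\vert_{{^r}H^{\alpha/2}(I)}$ from below by the pairing. Integrating over $\Lambda_d$ and reinstating the $L^2$ contribution then assembles the two-sided equivalence with the full temporal norms.

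The step I expect to be the main obstacle is this lower bound, specifically controlling the phase factor $(i\omega)^{\alpha/2}$ and verifying that positivity survives: one must check that the real part of the symbol product is $\vert\omega\vert^{\alpha}\cos(\tfrac{\alpha\pi}{2})$ while the imaginary part integrates to zero by the odd symmetry of $\mathrm{sgn}(\omega)$, and that the restriction $\alpha\in(0,1)$ (equivalently $\sigma\neq n-\tfrac12$ in Lemma \ref{Lem: norm equivalence 1}) keeps the cosine bounded away from zero. The remaining bookkeeping---justifying the zero-extension to $\mathbb{R}$ and reassembling the tensor-product norms---is routine.
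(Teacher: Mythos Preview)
The paper does not prove this lemma; it is stated without proof (together with the spatial analogues \eqref{equiv_space}--\eqref{equiv_space2}) as a fact imported from \cite{Li2009,ervin2007variational,samiee2016}, and in the sequel only its upper-bound direction is invoked explicitly, in the proof of Lemma~\ref{continuity_lem}. Your proposal therefore supplies an argument the paper omits, and the strategy you outline---reduce by Fubini to the one-dimensional temporal pairing and then appeal to the Fourier-symbol computation behind the cited seminorm equivalences---is precisely the route taken in those references.

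One caution on the lower half of the equivalence: the diagonal identity you derive, with real part $\cos(\frac{\alpha\pi}{2})\,\vert w\vert^{2}_{H^{\alpha/2}}$, controls the pairing when $u=v=w$, but it does not by itself give a lower bound on $\big\vert(\prescript{}{0}{\mathcal{D}}_{t}^{\alpha/2} u,\prescript{}{t}{\mathcal{D}}_{T}^{\alpha/2} v)\big\vert$ for \emph{arbitrary} pairs $u,v$ (the pairing can vanish while both norms are nonzero). What the diagonal bound actually yields is the inf-sup form: for each $u$ one can exhibit a $v$ realizing the bound. Since $u\in\prescript{l}{0}H^{\alpha/2}(I;L^2(\Lambda_d))$ and $v\in\prescript{r}{0}H^{\alpha/2}(I;L^2(\Lambda_d))$ satisfy opposite endpoint conditions, selecting that $v$ requires a reflection or extension step you have not written out. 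Given that the paper's $\equiv$ is used loosely here and only the continuity direction is actually consumed in Lemma~\ref{continuity_lem}, this is a minor omission rather than a genuine gap in your approach.
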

Moreover,
\begin{align}
\label{equiv_space}
\vert \big(\prescript{}{a_d}{\mathcal{D}}_{x_d}^{\beta_d/2} u, \prescript{}{x_d}{\mathcal{D}}_{b_d}^{\beta_d/2} v\big)_{\Lambda_d} \vert \equiv  \vert u \vert_{\prescript{c}{}H^{\beta_d/2} \Big((a_d,b_d); L^2(\Lambda_{d-1}) \Big)} \, \vert v \vert_{\prescript{c}{}H^{\beta_d/2} \Big((a_d,b_d); L^2(\Lambda_{d-1}) \Big)},
\\
\label{equiv_space2}
\vert \big(\prescript{}{x_d}{\mathcal{D}}_{b_d}^{\beta_d/2} u, \prescript{}{a_d}{\mathcal{D}}_{x_d}^{\beta_d/2} v\big)_{\Lambda_d} \vert \equiv  \vert u \vert_{\prescript{c}{}H^{\beta_d/2} \Big((a_d,b_d); L^2(\Lambda_{d-1}) \Big)} \, \vert v \vert_{\prescript{c}{}H^{\beta_d/2} \Big((a_d,b_d); L^2(\Lambda_{d-1}) \Big)}.
\end{align} 
\begin{lem}[Continuity]
	\label{continuity_lem}
	The bilinear form \eqref{Eq: bilinear form} is continuous, i.e.,
	\begin{align}
	\label{continuity_eq}
	\forall u \in U, \,\,
	\exists \, \beta > 0,  
	\quad \text{s.t.} \quad 
	\vert a(u,v)\vert \leq 
	\beta \,\,   \Vert u \Vert_{U}   \,\,     \Vert v \Vert_{V},
	\quad \forall v \in V.
	\end{align}
\end{lem}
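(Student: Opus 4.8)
The plan is to establish continuity by a direct application of the Cauchy--Schwarz inequality to each of the $2d+1$ inner products appearing in the bilinear form \eqref{Eq: bilinear form}, followed by the elementary observation that every individual $L^2$-norm of a fractional derivative produced in this way is itself dominated by the full solution or test norm. First I would apply the triangle inequality to \eqref{Eq: bilinear form} to obtain
\begin{align*}
\vert a(u,v) \vert \leq \big\vert (\prescript{}{0}{\mathcal{D}}_{t}^{\frac{\alpha}{2}} u, \prescript{}{t}{\mathcal{D}}_{T}^{\frac{\alpha}{2}} v)_{\mathbb{D}} \big\vert + \sum_{j=1}^{d} k_j \Big[ \big\vert (\prescript{}{a_j}{\mathcal{D}}_{x_j}^{\frac{\beta_j}{2}} u, \prescript{}{x_j}{\mathcal{D}}_{b_j}^{\frac{\beta_j}{2}} v)_{\mathbb{D}} \big\vert + \big\vert (\prescript{}{x_j}{\mathcal{D}}_{b_j}^{\frac{\beta_j}{2}} u, \prescript{}{a_j}{\mathcal{D}}_{x_j}^{\frac{\beta_j}{2}} v)_{\mathbb{D}} \big\vert \Big].
\end{align*}
To the temporal inner product I would invoke Lemma \ref{norm_223} (or simply Cauchy--Schwarz in $L^2(\mathbb{D})$) to bound it by $\Vert \prescript{}{0}{\mathcal{D}}_{t}^{\alpha/2} u \Vert_{L^2(\mathbb{D})}\, \Vert \prescript{}{t}{\mathcal{D}}_{T}^{\alpha/2} v \Vert_{L^2(\mathbb{D})}$, and to the two spatial inner products I would use the equivalences \eqref{equiv_space}--\eqref{equiv_space2} (or again Cauchy--Schwarz) to bound them by products of the corresponding one-sided fractional-derivative $L^2$-norms of $u$ and $v$.

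The second step is purely algebraic. Each factor produced above, for instance $\Vert \prescript{}{0}{\mathcal{D}}_{t}^{\alpha/2} u \Vert_{L^2(\mathbb{D})}$, $\Vert \prescript{}{a_j}{\mathcal{D}}_{x_j}^{\beta_j/2} u \Vert_{L^2(\mathbb{D})}$, and $\Vert \prescript{}{x_j}{\mathcal{D}}_{b_j}^{\beta_j/2} u \Vert_{L^2(\mathbb{D})}$, occurs verbatim as one of the summands under the square root defining $\Vert u \Vert_U$ in \eqref{Eq: solution space norm} (recall $\Omega = \mathbb{D} = I \times \Lambda_d$); since all summands there are nonnegative, each such factor is bounded above by $\Vert u \Vert_U$. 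The identical observation applied to \eqref{Eq: test space norm} bounds every test factor by $\Vert v \Vert_V$. Substituting these bounds term by term yields
\begin{align*}
\vert a(u,v) \vert \leq \Big( 1 + 2 \sum_{j=1}^{d} k_j \Big) \, \Vert u \Vert_U \, \Vert v \Vert_V,
\end{align*}
so that \eqref{continuity_eq} holds with $\beta = 1 + 2\sum_{j=1}^d k_j$, modified by the equivalence constants from Lemma \ref{norm_223} and \eqref{equiv_space}--\eqref{equiv_space2} if those are used in place of bare Cauchy--Schwarz.

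This argument is essentially routine; the only point requiring care is to confirm that the norm-equivalence results of Lemma \ref{Lem: norm equivalence 1} and Lemma \ref{space norm 1} have been applied consistently, so that the tensorized space-time seminorms of $u$ and $v$ genuinely reduce to the explicit sums of one-sided derivative $L^2$-norms displayed in \eqref{Eq: solution space norm}--\eqref{Eq: test space norm}. Once that identification is in place, the dominance of each component norm by the full norm is immediate and no further estimate is needed.
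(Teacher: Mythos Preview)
Your proposal is correct and takes essentially the same approach as the paper, which simply states that the proof ``directly concludes from \eqref{equiv_space} and Lemma \ref{norm_223}.'' You have merely unpacked that one-line argument in full detail, making explicit the triangle inequality, the termwise bounds, and the resulting constant $\beta = 1 + 2\sum_{j=1}^d k_j$ (up to equivalence constants).
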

\begin{proof}
	The proof directly concludes from \eqref{equiv_space} and Lemma \ref{norm_223}.
\end{proof}	

\begin{thm}[Stability]
	\label{inf_sup_d_lem}
	The following inf-sup condition holds for the bilinear form \eqref{Eq: bilinear form}, i.e.,
	\begin{align}
	\label{Eq: inf sup-time_d_well}
	\underset{ u \neq 0  \in U}{\inf} \,\, \underset{ v \neq 0 \in V}{\sup}
	\frac{\vert a(u , v)\vert}{ \,\, \Vert v\Vert_{V} \,\, \Vert u \Vert_{U} } \geq \beta > 0,
	%
	%
	\end{align}
	where $\Omega = I \times \Lambda_d$ and $\underset{u \in U}{\sup} \,\, \vert a(u , v)\vert>0$.
\end{thm}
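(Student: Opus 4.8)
The plan is to establish the stability via the generalized Lax--Milgram (inf-sup) theory, which for this Petrov--Galerkin setting requires two ingredients: the inf-sup bound \eqref{Eq: inf sup-time_d_well} itself, and the non-degeneracy condition $\sup_{u \in U} \vert a(u,v)\vert > 0$ for every $v \neq 0 \in V$ stated alongside it. Continuity being already in hand (Lemma \ref{continuity_lem}), these two conditions deliver existence, uniqueness, and stability of the weak solution of \eqref{Eq: general weak form FPDE}. The engine of the argument is the collection of equivalences already established: Lemma \ref{norm_223} for the temporal pairing and \eqref{equiv_space}--\eqref{equiv_space2} for the two-sided spatial pairings, combined with the explicit norm representations \eqref{Eq: solution space norm}--\eqref{Eq: test space norm} and Lemma \ref{space norm 1}.

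For the inf-sup bound I would fix $u \in U$ with $u \neq 0$ and construct an associated test function $v = v(u) \in V$ that makes every inner product in the bilinear form \eqref{Eq: bilinear form} simultaneously sign-definite and of the right magnitude. In time, the correspondence $u \mapsto v$ is governed by Lemma \ref{norm_223}: one selects the right half-derivative $\prescript{}{t}{\mathcal{D}}_{T}^{\alpha/2} v$ so that $(\prescript{}{0}{\mathcal{D}}_{t}^{\alpha/2} u, \prescript{}{t}{\mathcal{D}}_{T}^{\alpha/2} v)_{\mathbb{D}}$ reproduces, up to the equivalence constant, $\Vert \prescript{}{0}{\mathcal{D}}_{t}^{\alpha/2} u\Vert_{L^2(\Omega)}^2$, with the homogeneous condition $v\vert_{t=T}=0$ ensuring $v \in V$. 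In space I would let $v$ inherit the profile of $u$, so that each two-sided pairing collapses through \eqref{equiv_space}--\eqref{equiv_space2} to the seminorm $\vert \cdot \vert_{\prescript{c}{}H^{\beta_j/2}}^2$; the decisive point is the sign, since $\beta_j \in (1,2)$ gives $\beta_j/2 \in (1/2,1)$ and the coercivity constant $\cos(\pi\beta_j/2) < 0$ \cite{ervin2007variational}, so the prefactor $-\sum_{j} k_j[\cdots]$ turns into a positive multiple of $\sum_{j}\vert u\vert^2$. Adding the temporal and spatial contributions and invoking \eqref{Eq: solution space norm} with Lemma \ref{space norm 1} gives $a(u,v) \gtrsim \Vert u\Vert_{U}^2$, while the boundedness of the construction yields $\Vert v\Vert_{V} \lesssim \Vert u\Vert_{U}$; dividing produces the uniform constant $\beta > 0$.

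The non-degeneracy condition follows by the mirror construction: for fixed $v \neq 0$ one interchanges the roles of the left/right operators and of $U$ and $V$, and the same three equivalences furnish a $u$ with $a(u,v) \neq 0$. Here the symmetric design of the solution and test spaces is essential, and the exceptional orders $n-\tfrac{1}{2}$ are safely excluded by $\alpha \in (0,1)$ and $\beta_j \in (1,2)$, so that Lemma \ref{space norm 1} and \eqref{equiv_space}--\eqref{equiv_space2} remain in force.

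The hardest part will be the joint temporal--spatial construction of $v$. Because $U$ and $V$ are intersection spaces coupled across time and space, a test function assembled by treating the two directions independently need not lie in $V$; worse, once the temporal component of $v$ is forced to differ from that of $u$ (to meet $v\vert_{t=T}=0$ and to activate Lemma \ref{norm_223}), the spatial pairings in \eqref{Eq: bilinear form} no longer factor through a clean integration in $t$, so the sign control of \eqref{equiv_space}--\eqref{equiv_space2} may be spoiled by temporal cross terms. Overcoming this calls either for a simultaneous spectral decomposition along the bases $\{\psi^{\tau}_n\}$ and $\{\phi_{m_j}\}$---precisely the diagonalization underlying the fast solver of Theorem \ref{Thm: fast solver}---verifying coercivity mode by mode and reassembling, or for a tensorized isomorphism $U \to V$ that commutes with the spatial fractional operators.
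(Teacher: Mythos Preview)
The paper does not actually supply a proof of this theorem: it is stated without argument, and the proof environment that follows belongs to the subsequent well-posedness theorem, which simply \emph{invokes} the inf-sup condition together with continuity and the generalized Babu\v{s}ka--Lax--Milgram theorem. The functional-analytic framework and, presumably, the full stability proof are deferred to the cited work \cite{samiee2016}. So there is no in-paper proof to compare your proposal against.

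Assessing your proposal on its own terms: the overall strategy is the standard and correct one for this setting, and you identify the right ingredients---Lemma~\ref{norm_223} for the temporal pairing, the equivalences \eqref{equiv_space}--\eqref{equiv_space2} for the spatial terms, and the sign $\cos(\pi\beta_j/2)<0$ for $\beta_j/2\in(1/2,1)$ that flips the $-k_j[\cdots]$ contributions to positive. You also correctly isolate the genuine difficulty: because $U$ and $V$ are intersection spaces with different temporal endpoint conditions, the map $u\mapsto v$ cannot simply be the identity in time, and once the temporal component of $v$ is altered the spatial bilinear terms no longer reduce cleanly to seminorms of $u$.

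That said, your proposal stops short of resolving this difficulty. The two mechanisms you suggest in the final paragraph---a mode-by-mode diagonalization in the eigenbasis underlying Theorem~\ref{Thm: fast solver}, or a tensorized isomorphism $U\to V$ commuting with the spatial operators---are both plausible, but neither is carried out, and the spectral route in particular would establish the discrete inf-sup \eqref{Eq: inf sup-time} rather than the continuous one \eqref{Eq: inf sup-time_d_well} unless supplemented by a density argument. As written, then, your text is a sound plan with the decisive construction still to be executed; it is not yet a proof, and you say as much yourself.
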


\begin{thm}[well-posedness]
	\label{Thm: well-posedness_1D}
	For all $0<\alpha<1$, $\alpha \neq 1$, and  $1<\beta_j<2$, and $j=1,\cdots,d$, there exists a unique solution to \eqref{Eq: general weak form FPDE}, continuously dependent on  $f$, which belongs to the dual space of $U$.
\end{thm}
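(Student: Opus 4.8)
The plan is to establish well-posedness as a direct application of the generalized Lax--Milgram theorem (the Banach--Ne\v{c}as--Babu\v{s}ka, or BNB, theorem) to the weak problem \eqref{Eq: general weak form FPDE}, using the structural results already assembled. First I would record the abstract setting: $U$ and $V$ as defined in \eqref{Eq: solution test space}, endowed with the norms \eqref{Eq: solution space norm}--\eqref{Eq: test space norm}, are Hilbert spaces and hence reflexive Banach spaces, and the map $v \mapsto (\text{f},v)_{\mathbb{D}}$ is a bounded linear functional on $V$ under the stated assumption on the data $\text{f} = h + f$. With these ingredients in place, the three hypotheses of the BNB theorem must be verified in turn.

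The key steps, in order, are: (i) invoke Lemma~\ref{continuity_lem} for the boundedness $\vert a(u,v)\vert \le \beta\,\Vert u\Vert_U\,\Vert v\Vert_V$ of the bilinear form; (ii) invoke the inf-sup condition of Theorem~\ref{inf_sup_d_lem}, namely $\inf_{u\neq 0}\sup_{v\neq 0} \vert a(u,v)\vert/(\Vert u\Vert_U\,\Vert v\Vert_V) \ge \beta > 0$, which supplies injectivity together with the closed-range property of the operator induced by $a(\cdot,\cdot)$; and (iii) invoke the non-degeneracy (transposed) condition, also recorded in Theorem~\ref{inf_sup_d_lem}, that $\sup_{u\in U}\vert a(u,v)\vert > 0$ for every $v\neq 0 \in V$, which rules out a nontrivial annihilator of the range and thereby yields surjectivity onto $V'$. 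Items (i)--(iii) are exactly the premises of the BNB theorem, so the operator $A\colon U \to V'$ defined by $\langle Au, v\rangle = a(u,v)$ is an isomorphism, giving existence and uniqueness of $u \in U$ solving \eqref{Eq: general weak form FPDE}.

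Finally I would extract the continuous dependence on the data directly from the inf-sup constant: combining $a(u,v) = (\text{f},v)_{\mathbb{D}}$ with the bound supplied by Theorem~\ref{inf_sup_d_lem} yields the a priori estimate $\Vert u\Vert_U \le \frac{1}{\beta}\,\sup_{v\neq 0}\vert(\text{f},v)_{\mathbb{D}}\vert/\Vert v\Vert_V$, which is precisely the asserted stability with respect to $\text{f}$ through its dual norm.

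The main obstacle is not located in this theorem itself but upstream: essentially all the analytical difficulty has already been discharged in establishing continuity (Lemma~\ref{continuity_lem}) and, more importantly, the inf-sup and non-degeneracy conditions (Theorem~\ref{inf_sup_d_lem}), which rest on the norm-equivalence results (Lemmas~\ref{Lem: norm equivalence 1} and~\ref{space norm 1}) and the fractional integration-by-parts identities (Lemmas~\ref{Lem: left frac proj}--\ref{lem_generalize}). Given those, the present proof reduces to a short verification of abstract hypotheses; the only points requiring care are confirming that $U$ is Banach and $V$ is reflexive so the BNB machinery applies, and checking that the restrictions $0<\alpha<1$ and $1<\beta_j<2$ keep the half-orders $\alpha/2$ and $\beta_j/2$ away from the excluded values $n-\tfrac{1}{2}$, so that the norm equivalences underpinning Theorem~\ref{inf_sup_d_lem} remain valid throughout the admissible parameter range.
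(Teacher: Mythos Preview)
Your proposal is correct and mirrors the paper's own proof: the paper simply invokes continuity (Lemma~\ref{continuity_lem}) and the inf-sup/stability result (Theorem~\ref{inf_sup_d_lem}) and then appeals to the generalized Babu\v{s}ka--Lax--Milgram theorem. Your write-up is more detailed in spelling out the BNB hypotheses and the a~priori bound, but the argument is essentially identical.
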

\begin{proof}
	Lemmas \ref{continuity_lem} (continuity) and \ref{inf_sup_d_lem} (stability) yield the well-posedness of weak form \eqref{Eq: general weak form FPDE} in (1+d)-dimension due to the generalized Babu\v{s}ka-Lax-Milgram theorem.
\end{proof}

Since the defined basis and test spaces are Hilbert spaces, and $U_N \subset U$ and $V_N \subset V$, we can prove that the developed Petrov-Gelerkin spectral method is stable and the following condition holds
\begin{align}
\label{Eq: inf sup-time}
\underset{u_N \neq 0 \in U_N}{\inf}\, \, \underset{v \neq 0 \in V_N}{\sup}
\frac{\vert a(u_N , v_N)\vert}{\Vert v_N\Vert_{V} \,\, \Vert u_N\Vert_{U}} \geq \beta > 0, 
\end{align}
with $\beta > 0$ and independent of $N$, where $\underset{u_N \in U_N}{\sup} \vert a(u_N , v_N)\vert>0$.


%
\section{\textbf{Numerical Results}}
\label{Sec: numerical results}
%
We investigate the performance of developed numerical methods by considering couple of numerical simulations. We compare MCS and PCM in random space discretization while using PG method in physical domain. We note that by several numerical examples, we make sure that the developed PG method is stable and accurate in solving each deterministic problem; the results are not provided here.

%
\subsection{Low-Dimensional Random Inputs}
%
As the first case, we consider a stochastic fractional initial value problem (IVP) with random fractional index by letting the diffusion coefficient to be zero, and also ignoring the additional random input and only taking $h(t)$ as the external forcing term. Therefore, we obtain
\begin{equation}
\label{Eq: SFIVP}
\prescript{}{0}{\mathcal{D}}_{t}^{\alpha(\xi)}  u(t;\xi) = h(t),
\end{equation}
subject to zero initial condition, where $u(t,\xi): (0,T] \times \Lambda \rightarrow \mathbb{R}$. We let $ u^{ext}(t) = \frac{\alpha}{2} \,\, t^{3+\frac{\alpha}{2}} $, $ h(t) = \prescript{}{0}{\mathcal{D}}_{t}^{\alpha(\xi)} u^{ext}(t) $ for each realization of $\alpha$. In this case, by choosing the tunning parameter $\tau$ in the temporal basis function to be $\frac{\alpha}{2}$, we can efficiently employ PG numerical scheme and also obtain the exact expectation by rendering the exact solution to be random with similar distribution as the random fractional index. Fig.\ref{Fig: MCM PCM SFIVP} shows the $L^2$-norm convergence rate of MCS and PCM in comparison of solution expectation with $\mathop{\mathbb{E}}^{ext}[u] = \mathop{\mathbb{E}}[u^{ext}]$. The results confirms converges rate of $0.5$ for MCS, while in PCM, the statistics of solution converges accurately very fast, using only few numbers of realizations. In this example, by ignoring the additional random input to the system, we take the advantage of having the exact random solution to be available. 
%
%
\begin{figure}[t]
	\centering
	\includegraphics[width=0.5\linewidth]{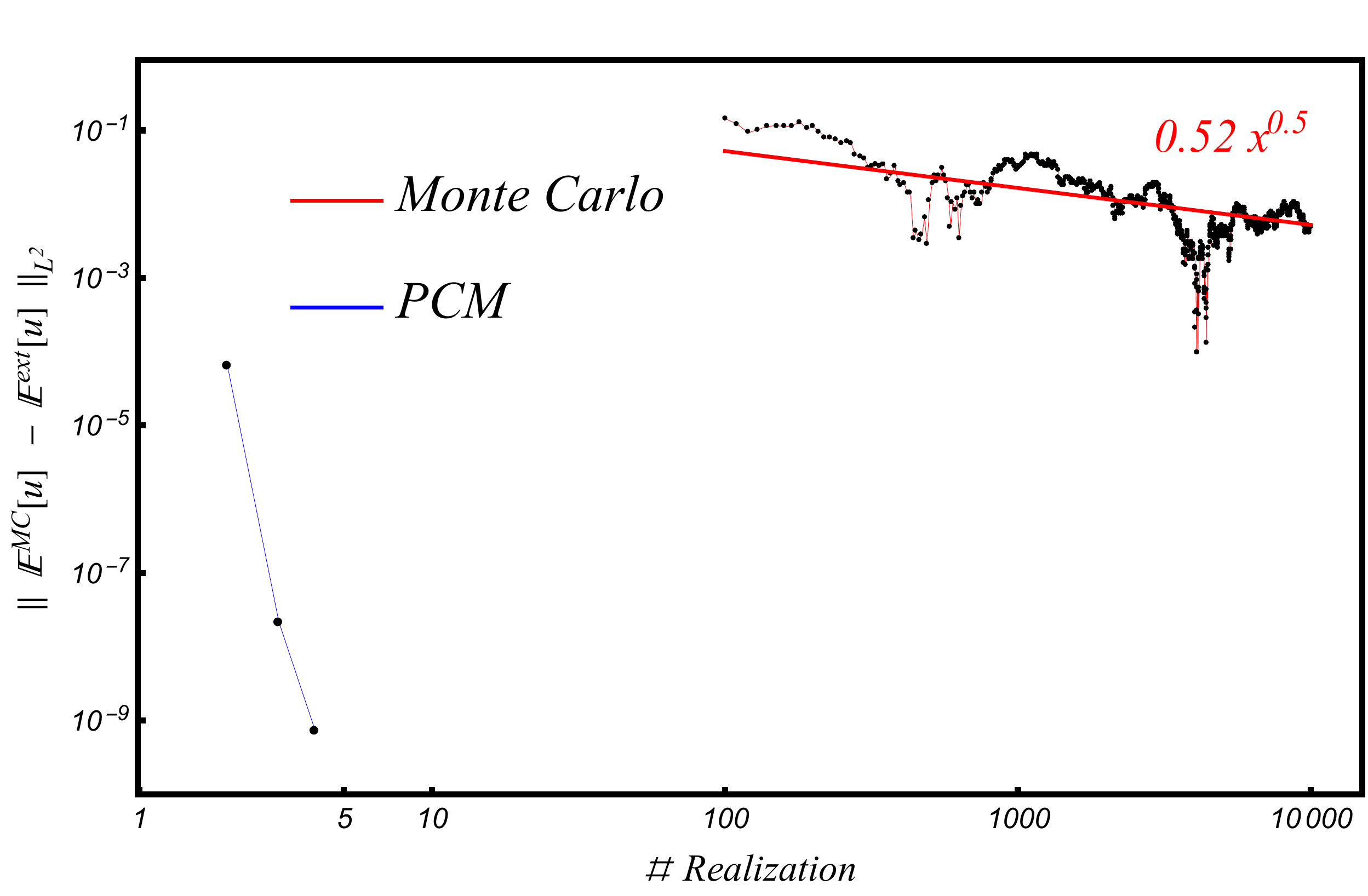}
	\caption{$L^2$-norm convergence rate of MCM and PCM for stochastic fractional IVP \eqref{Eq: SFIVP}.}
	\label{Fig: MCM PCM SFIVP}
\end{figure}
%
%

As another example, we also consider \eqref{Eq: SFIVP} with additional random input, expanded by KL expansion with $M=4$, as:
\begin{equation}
\label{Eq: SFIVP-2}
\prescript{}{0}{\mathcal{D}}_{t}^{\alpha(\xi)}  u(t;\boldsymbol{\xi}) = h(t) +  \sum_{k=1}^M \, a_k \, \sin\left(\frac{2 k \pi \, t}{T}\right) \, \xi_k,
\end{equation}
with two cases $h(t) = t^2$ and $h(t) = sin(\pi t)$. Fig.\ref{Fig: MCM PCM SFIVP-2} shows the mean value and variance of solution for $10^4$ sampling of MCS compared to $625$ realizations in PCM.
%
%
\begin{figure}[t]
	\centering
	\begin{subfigure}{0.45\textwidth}
		\centering
		\includegraphics[width=1\linewidth]{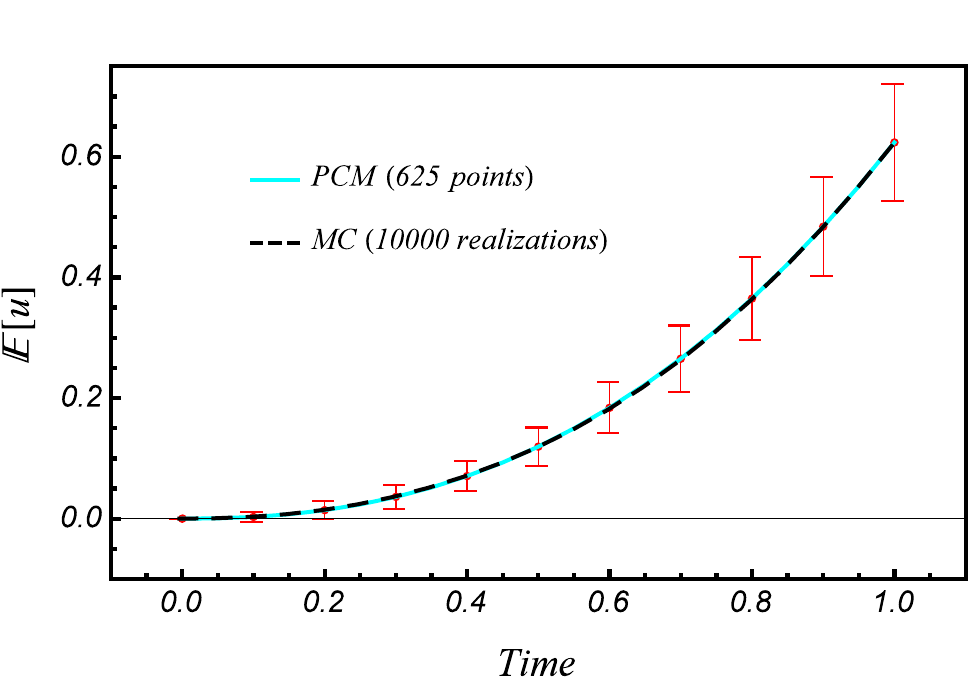}
	\end{subfigure}
	\begin{subfigure}{0.45\textwidth}
		\centering
		\includegraphics[width=1\linewidth]{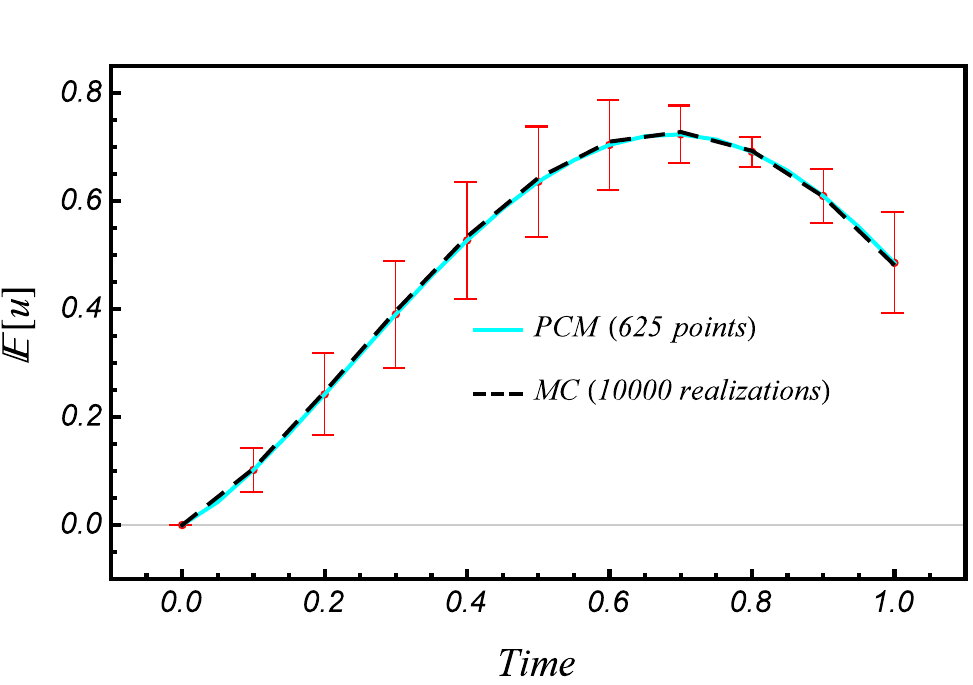}
	\end{subfigure}
	\caption{Expectation of solution to \eqref{Eq: SFIVP-2} with uncertainty (standard deviation) bounds, employing MCS and PCM for (left) $h(t) = t^2$ and (right) $h(t) = sin(\pi t)$. }
	\label{Fig: MCM PCM SFIVP-2}
\end{figure}
%
%


Moreover, we consider (1+1)-D one-sided SFPDE given in \eqref{Doob_momentum-2}, where $d=1$ and the diffusion coefficient is $k_l$. We ignore the additional random input and consider $h(t,x)$ as the only external forcing term. Therefore, we obtain
\begin{align}
\label{Eq: 1+1-d SFPDE no random noise}
&\prescript{}{0}{\mathcal{D}}_{t}^{\alpha(\xi_1)}  u(t,x;\boldsymbol{\xi}) 
+ k_l \prescript{}{-1}{\mathcal{D}}_{x}^{\, \beta(\xi_2)} u(t,x;\boldsymbol{\xi}) 
= h(t,x),
\end{align}
subject to zero initial/boundary conditions, where $u(t,x;\boldsymbol{\xi}): (0,T] \times (-1,1) \times \Lambda \rightarrow \mathbb{R}$, and the only random variables are the fractional indices $\alpha$ and $\beta$. We let $u^{ext}(t,x) = t^{3+\tau} \, \left((1+x)^{3+\mu}-\frac{1}{2}(1+x)^{4+\mu}\right)$, and choose $\tau = \alpha/2$ and $\mu = \beta/2 $. For each realization of $\alpha$ and $\beta$, we obtain the force function $h(t,x)$ by substituting the corresponding $u^{ext}$ to \eqref{Eq: 1+1-d SFPDE no random noise}. Defining $\mathop{\mathbb{E}}^{ext}[u] = \mathop{\mathbb{E}}[u^{ext}]$, Fig.\ref{Fig: MCM PCM SDPDE no noise} shows the $L^2$-norm convergence of solution expectation as compared to the exact expectation. We observe that PCM converges accurately with only few number of realizations.
%
%
\begin{figure}[t]
	\centering
	\includegraphics[width=0.5\linewidth]{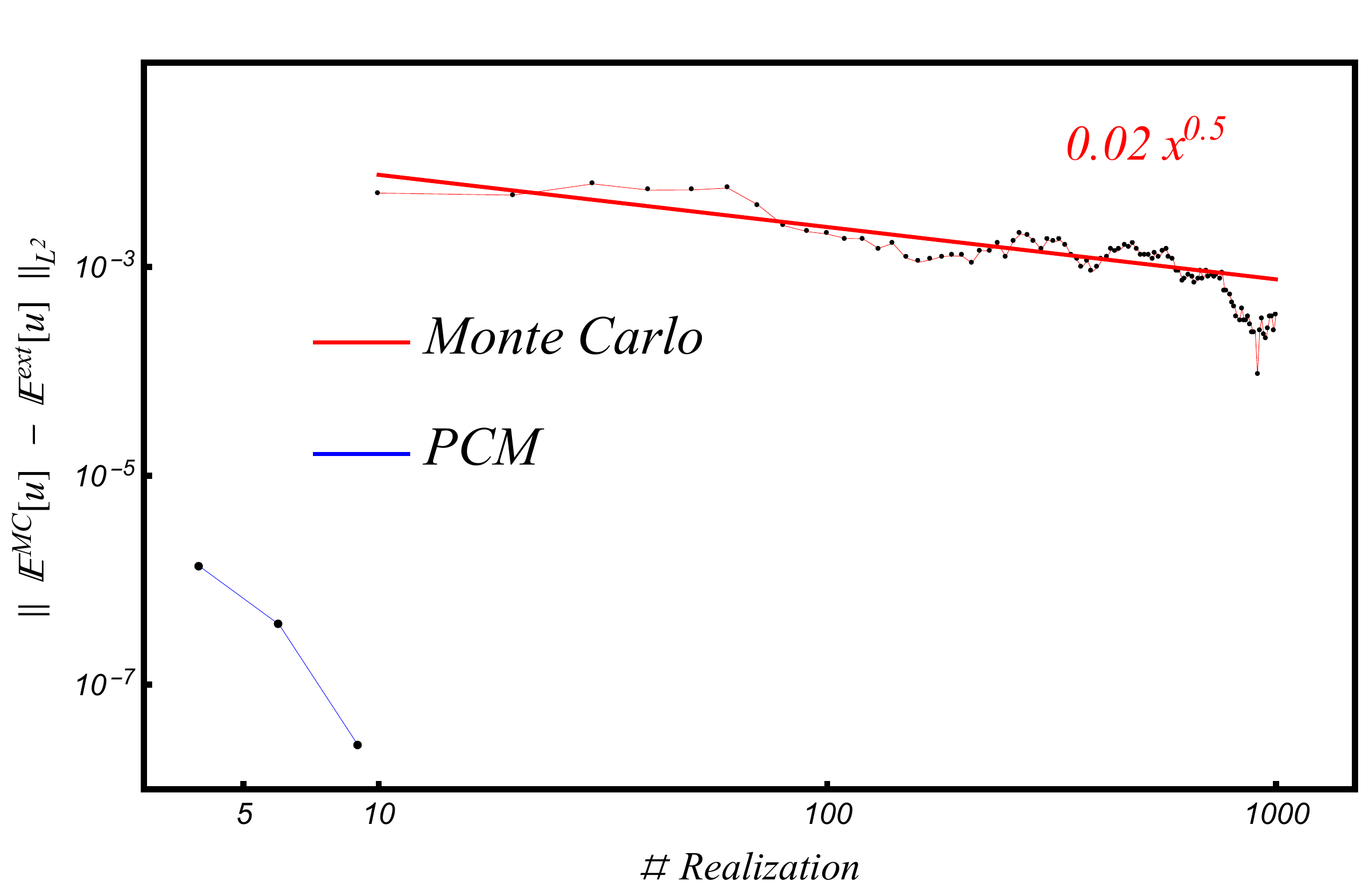}
	\caption{$L^2$-norm convergence rate of MCM and PCM for SFPDE \eqref{Eq: 1+1-d SFPDE no random noise}.}
	\label{Fig: MCM PCM SDPDE no noise}
\end{figure}
%
%

Considering additional random input, expanded by KL expansion with $M=4$, the problem can be recast as
\begin{align}
\label{1+1-d SFPDE with random noise}
&\prescript{}{0}{\mathcal{D}}_{t}^{\alpha(\boldsymbol{\xi})}  u(t,x;\boldsymbol{\xi}) 
+ k_l \prescript{}{-1}{\mathcal{D}}_{x}^{\, \beta(\boldsymbol{\xi})} u(t,x;\boldsymbol{\xi}) 
= h(t,x) +   \sum_{k=1}^M \, a_k \, \sin\left(\frac{2 k \pi \, t}{T}\right) \, \xi_k 
\end{align}
subject to zero initial/boundary conditions. Fig.\ref{Fig: MCM PCM SFPDE-2} shows the mean value and variance of solution for MCS and PCM at different times.

%
%
\begin{figure}[t]
	\centering
	\includegraphics[width=0.5\linewidth]{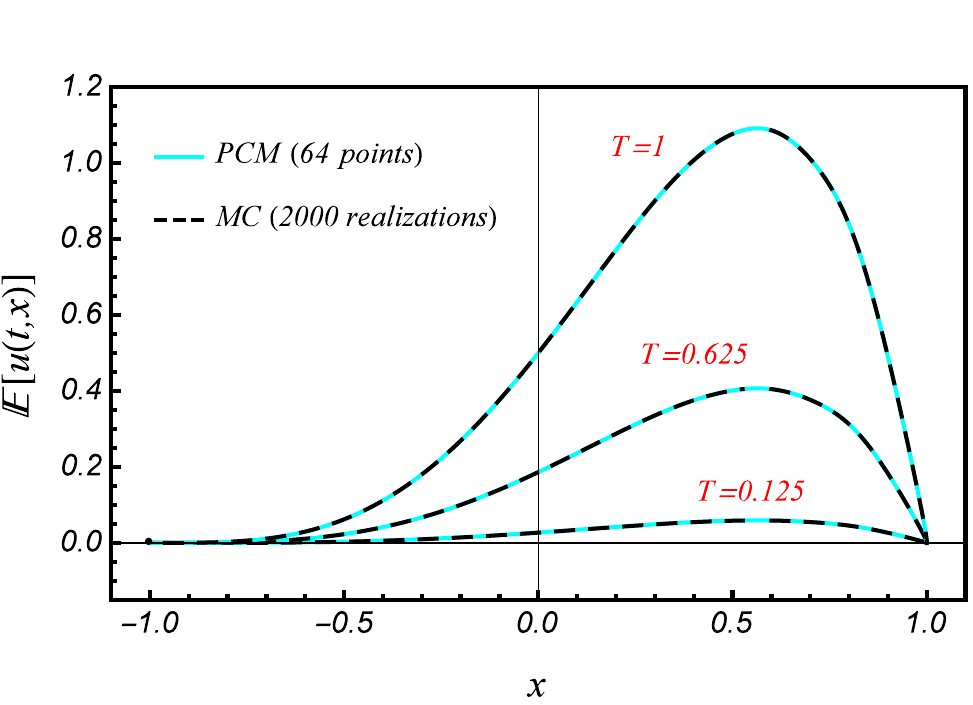}
	\caption{Expectation of solution to \eqref{1+1-d SFPDE with random noise}, employing MCS and PCM at $t=0.125, \, 0.625, \, 1$.}
	\label{Fig: MCM PCM SFPDE-2}
\end{figure}
%
%

\begin{rem}
We note that generally use of the sparse grid operators in obtaining solution statistics is more effective when dimension of the random space is higher than 6. Thus, in the numerical examples for low-dimensional random inputs, we employ the easy-to-implement tensor product nodal sets. 
\end{rem}

%
\subsection{Moderate- to High-Dimensional Random Inputs}
%
We render the problem with higher number of terms in KL expansion of random inputs in \eqref{1+1-d SFPDE with random noise} by choosing $M=10$ and $M=20$. This yields the dimension of random space $\mathcal{N} = 12$ and $\mathcal{N} = 22$, respectively. By employing the Smolyak sparse grid generator and using the developed PCM, we obtain the solution statistics. For each case, we generate the sparse grid on two levels $w=1$ and $w=2$, i.e. $A(1,12)$, $A(2,12)$, $A(1,22)$, and $A(2,22)$, where we let the higher resolution case be a benchmark value to the solution statistics, based on which we compute and normalize the error. We observe that for both cases $\mathcal{N} = 12$ and $\mathcal{N} = 22$, the normalized error in computing the expectation and standard deviation of solution are of orders $\mathcal{O}(10^{-7})$ and $\mathcal{O}(10^{-3})$, respectively.


%
\section{Summary and Discussion}
\label{Sec: Summary and Conclusion} 
%
We developed a mathematical framework to numerically quantify the solution uncertainty of a stochastic FPDE, associated with the randomness of model parameters. The stochastic FPDE is reformulated by rendering the problem with random fractional indices, subject to additional random noise. We used the truncated Karhunen-Lo\'eve expansion to parametrize the additive noise. Then, by employing a non-intrusive probabilistic collocation method (PCM), we propagated the associated randomness to the system response, by using Smolyak sparse grid generator to construct the set of sample point in the random space. We also formulated a forward solver to simulate the deterministic counterpart of the stochastic problem for each realization of random variables. We showed that the deterministic problem is mathematically well-posed in a weak sense. Furthermore, by employing Jacobi poly-fractonomials and Legendre polynomials as the temporal and spatial basis/test functions, respectively, we developed a Petrove-Galerkin spectral method to solve the deterministic problem in the physical domain. We also proved that the \text{inf-sup} condition holds for the proposed numerical scheme, and thus, it is stable. By considering several numerical examples with low- to high-dimensional random spaces, we examined the performance of our stochastic discretization. We showed that in each case, PCM converges very fast to a very high level of accuracy with very few number of sampling.


\bibliographystyle{siam}
\bibliography{SFPDE}

\end{document}